\documentclass[11pt]{article}
\usepackage{enumerate}
\usepackage{amssymb,a4wide,latexsym,makeidx,epsfig,fleqn}
\usepackage{float}
\usepackage{amsthm}
\usepackage{amsmath}
\usepackage{enumerate}
\usepackage{color}
\usepackage{graphicx}
\usepackage{multirow}
\newtheorem{theorem}{Theorem}[section]
\newtheorem{lemma}[theorem]{Lemma}
\newtheorem{corollary}[theorem]{Corollary}

\newtheorem{example}[theorem]{Example}
\newtheorem{definition}[theorem]{Definition}
\newtheorem{conjecture}[theorem]{Conjecture}

\allowdisplaybreaks

\begin{document}
\textwidth 150mm \textheight 225mm
\title{On the $A_\alpha$ spectral radius and $A_\alpha$ energy of non-strongly connected digraphs \thanks{\small Supported by the National Natural Science Foundation of China (No. 11871398), and the Natural Science Foundation of Shaanxi Province (No. 2020JQ-107).}}

\author{{Xiuwen Yang$^{a,b}$, Ligong Wang$^{a,b,}$\footnote{Corresponding author.}}\\
{\small $^{a}$  School of Mathematics and Statistics, Northwestern Polytechnical University,}\\{\small  Xi'an, Shaanxi 710129, P.R. China.}\\ {\small $^{b}$ Xi'an-Budapest Joint Research Center for Combinatorics, Northwestern Polytechnical University,}\\{\small  Xi'an, Shaanxi 710129, P.R. China.}\\{\small E-mail: yangxiuwen1995@163.com, lgwangmath@163.com}}
\date{}
\maketitle
\begin{center}
\begin{minipage}{135mm}
\vskip 0.3cm
\begin{center}
{\small {\bf Abstract}}
\end{center}
{\small Let $A_\alpha(G)$ be the $A_\alpha$-matrix of a digraph $G$ and $\lambda_{\alpha 1}, \lambda_{\alpha 2}, \ldots, \lambda_{\alpha n}$ be the eigenvalues of $A_\alpha(G)$. Let $\rho_\alpha(G)$ be the $A_\alpha$ spectral radius of $G$ and $E_\alpha(G)=\sum_{i=1}^n \lambda_{\alpha i}^2$ be the $A_\alpha$ energy of $G$ by using second spectral moment. Let $\mathcal{G}_n^m$ be the set of non-strongly connected digraphs with order $n$, which contain a unique strong component with order $m$ and some directed trees which are hung on each vertex of the strong component. In this paper, we characterize the digraph which has the maximal $A_\alpha$ spectral radius and the maximal (minimal) $A_\alpha$ energy in $\mathcal{G}_n^m$.
\vskip 0.1in \noindent {\bf Key Words}: \ $A_\alpha$ spectral radius; $A_\alpha$ energy; non-strongly connected digraphs \vskip
0.1in \noindent {\bf AMS Subject Classification (2020)}: \ 05C20, 05C50 }
\end{minipage}
\end{center}

\section{Introduction }

Let $G=(\mathcal{V}(G),\mathcal{A}(G))$ be a digraph which $\mathcal{V}(G)=\{v_1,v_2,\ldots,v_n\}$ is the vertex set of $G$ and $\mathcal{A}(G)$ is the arc set of $G$. For an arc from vertex $v_i$ to $v_j$, we denote by $(v_i, v_j)$, and $v_i$ is the tail of $(v_i, v_j)$ and $v_j$ is the head of $(v_i, v_j)$. The outdegree $d_i^+=d_G^+(v_i)$ of $G$ is the number of arcs whose tail is vertex $v_i$ and the indegree $d_i^-=d_G^-(v_i)$ of $G$ is the number of arcs whose head is vertex $v_i$. We denote the maximum outdegree and the maximum indegree of $G$ by $\Delta^+(G)$ and $\Delta^-(G)$, respectively. A walk $\pi$ of length $l$ from vertex $u$ to vertex $v$ is a sequence of vertices $\pi$: $u=v_0,v_1,\ldots,v_l=v$, where $(v_{k-1},v_k)$ is an arc of $G$ for any $1\leq k\leq l$. If $u=v$ then $\pi$ is called a closed walk. Let $c_2$ denote the number of all closed walks of length $2$. A directed path $P_{n}$ with $n$ vertices is a digraph which the vertex set is $\{v_i|\ i=1,2,\ldots,n\}$ and the arc set is $\{(v_i,v_{i+1})|\ i=1,2,\ldots,n-1\}$. A directed cycle $C_n$ with $n\geq2$ vertices is a digraph which the vertex set is $\{v_i|\ i=1,2,\ldots,n\}$ and the arc set is $\{(v_i,v_{i+1})|\ i=1,\ldots,n-1\}\cup\{(v_n,v_1)\}$. A digraph $G$ is connected if its underlying graph is connected. A digraph $G$ is strongly connected if for each pair of vertices $v_i,v_j\in\mathcal{V}(G)$, there is a directed path from $v_i$ to $v_j$ and one from $v_j$ to $v_i$. A strong component of $G$ is a maximal strongly connected subdigraph of $G$. Throughout this paper, we only consider a connected digraph $G$ containing neither loops nor multiple arcs.

For a digraph $G$ with order $n$, the adjacency matrix $A(G)=(a_{ij})_{n\times n}$ of $G$ is a $(0,1)$-square matrix whose $(i,j)$-entry equals to $1$, if $(v_i, v_j)$ is an arc of $G$ and equals to $0$, otherwise. The Laplacian matrix $L(G)$ and the signless Laplacian matrix $Q(G)$ of $G$ are $L(G)=D^+(G)-A(G)$ and $Q(G)=D^+(G)+A(G)$, respectively, where $D^+(G)=diag(d_1^+,d_2^+,\ldots,d_n^+)$ is a diagonal outdegree matrix of $G$. In 2019, Liu et al. \cite{LiWC} defined the $A_\alpha$-matrix of $G$ as
$$A_\alpha(G)=\alpha D^+(G)+(1-\alpha)A(G),$$
where $\alpha\in[0,1]$. It is clear that if $\alpha=0$, then $A_0(G)=A(G)$; if $\alpha=\frac{1}{2}$, then $A_\frac{1}{2}(G)=\frac{1}{2}Q(G)$; if $\alpha=1$, then $A_1(G)=D^+(G)$. Since $D^+(G)$ is not interesting, we only consider $\alpha\in[0,1)$. The eigenvalue of $A_\alpha(G)$ with largest modulus is called the $A_\alpha$ spectral radius of $G$, denoted by $\rho_\alpha(G)$.

Actually, in 2017, Nikiforov \cite{Ni} first proposed the $A_\alpha$-matrix of a graph $H$ with order $n$ as
$$A_\alpha(H)=\alpha D(H)+(1-\alpha)A(H),$$
where $D(H)=diag(d_1,d_2,\ldots,d_n)$ is a diagonal degree matrix of $H$ and $\alpha\in[0,1]$. After that, many scholars began to study the $A_\alpha$-matrices of graphs. Nikiforov et al. \cite{NiPR} gave several results about the $A_\alpha$-matrices of trees and gave the upper and lower bounds for the spectral radius of the $A_\alpha$-matrices of arbitrary graphs. Let $\lambda_1(A_\alpha(H))\geq\lambda_2(A_\alpha(H))\geq\cdots\geq\lambda_n(A_\alpha(H))$ be the eigenvalues of $A_\alpha(H)$. Lin et al. \cite{LiXS} characterized the graph $H$ with $\lambda_k(A_\alpha(H))=\alpha n-1$ for $2 \leq k\leq n$ and showed that $\lambda_n(A_\alpha(H))\geq2\alpha-1$ if $H$ contains no isolated vertices. Liu et al. \cite{LiDS} presented several upper and lower bounds on the $k$-th largest eigenvalue of $A_\alpha$-matrix and characterized the extremal graphs corresponding to some of these obtained bounds. More results about $A_\alpha$-matrix of a graph can be found in \cite{LiSu,LiHX,LiLX,LiLi,NiRo,WaWT}. Recently, Liu et al. \cite{LiWC} characterized the digraph which had the maximal $A_\alpha$ spectral radius in $\mathcal{G}_n^r$, where $\mathcal{G}_n^r$ is the set of digraphs with order $n$ and dichromatic number $r$. Xi et al. \cite{XiSW} determined the digraphs which attained the maximum (or minimum) $A_\alpha$ spectral radius among all strongly connected digraphs with given parameters such as girth, clique number, vertex connectivity or arc connectivity. Xi and Wang \cite{XiWa} established some lower bounds on $\Delta^+-\rho_\alpha(G)$ for strongly connected irregular digraphs with given maximum outdegree and some other parameters. Ganie and Baghipur \cite{GaBa} obtained some lower bounds for the spectral radius of $A_\alpha(G)$ in terms of the number of vertices, the number of arcs and the number of closed walks of the digraph $G$.

It is well-known that the energy of the adjacency matrix of a graph $H$ first defined by Gutman \cite{Gut} as $E_A(H)=\sum_{i=1}^{n}\nu_i$, where $\nu_i$ is an eigenvalue of the adjacency matrix of $H$. Pe\~{n}a and Rada \cite{PeRa} defined the energy of the adjacency matrix of a digraph $G$ as $E_A(G)=\sum_{i=1}^{n} |\textnormal{Re}(z_i)|$, where $z_i$ is an eigenvalue of the adjacency matrix of $G$ and $\textnormal{Re}(z_i)$ is the real part of eigenvalue $z_i$. Some results about the energy of the adjacency matrices of graphs and digraphs have been obtained in \cite{Bru,CvDS,GuLi}. Lazi\'c \cite{La} defined the Laplacian energy of a graph $H$ as $LE(H)=\sum_{i=1}^n \mu_i^2$ by using second spectral moment, where $\mu_i$ is an eigenvalue of $L(H)$. Perera and Mizoguchi \cite{PeMi} defined the Laplacian energy $LE(G)$ of a digraph $G$ as $LE(G)=\sum_{i=1}^n \lambda_i^2$ by using second spectral moment, where $\lambda_i$ is an eigenvalue of $L(G)$. Yang and Wang \cite{YaWa} defined the signless Laplacian energy as $E_{SL}(G)=\sum_{i=1}^n q_i^2$ of a digraph $G$ by using second spectral moment, where $q_i$ is an eigenvalue of $Q(G)$. In this paper, we study the $A_\alpha$ energy as $E_\alpha(G)=\sum_{i=1}^n \lambda_{\alpha i}^2$ of a digraph $G$ by using second spectral moment, where $\lambda_{\alpha i}$ is an eigenvalue of $A_\alpha(G)$.

\begin{figure}[htbp]
\begin{centering}
\includegraphics[scale=0.75]{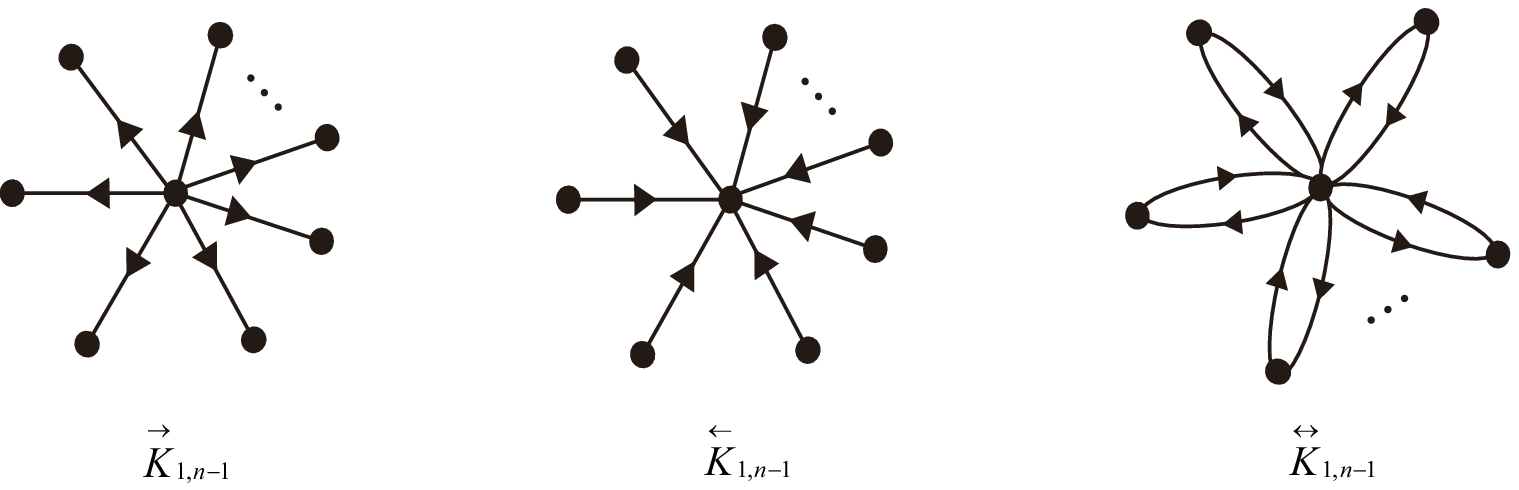}
\caption{An out-star $\stackrel{\rightarrow}{K}_{1,n-1}$, an in-star $\stackrel{\leftarrow}{K}_{1,n-1}$ and a symmetric-star $\stackrel{\leftrightarrow}{K}_{1,n-1}$}\label{fi:ch-1}
\end{centering}
\end{figure}

\begin{figure}[htbp]
\begin{centering}
\includegraphics[scale=1.2]{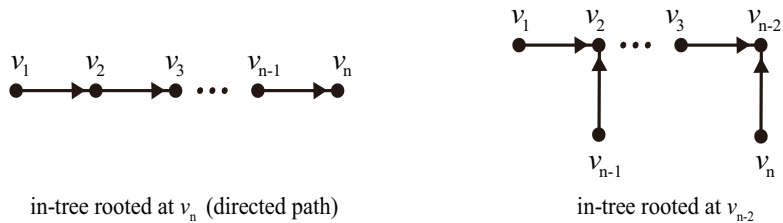}
\caption{Two different in-trees}\label{fi:ch-2}
\end{centering}
\end{figure}

Next, we will introduce some concepts of digraphs.
An arc $(v_i,v_j)$ is said to be simple if $(v_i,v_j)$ is an arc in $G$ but $(v_j,v_i)$ is not an arc in $G$. A digraph $G$ is simple if every arc in $G$ is simple. An arc $(v_i,v_j)$ is said to be symmetric if both $(v_i,v_j)$ and $(v_j,v_i)$ are arcs in $G$. A digraph $G$ is symmetric if every arc in $G$ is symmetric.
Let $T$ be a directed tree with $n$ vertices and $e$ arcs without cycles and $n=e+1$. If $n=1$, then the directed tree is a vertex.
Let $\stackrel{\rightarrow}{K}_{1,n-1}$ be an out-star with $n$ vertices which has one vertex with outdegree $n-1$ and other vertices with outdegree $0$ (see $\stackrel{\rightarrow}{K}_{1,n-1}$ in Figure \ref{fi:ch-1}). And the vertex with outdegree $n-1$ is called the centre of $\stackrel{\rightarrow}{K}_{1,n-1}$.
Let $\stackrel{\leftarrow}{K}_{1,n-1}$ be an in-star with $n$ vertices which has one vertex with indegree $n-1$ and other vertices with indegree $0$ (see $\stackrel{\leftarrow}{K}_{1,n-1}$ in Figure \ref{fi:ch-1}).
Let $\stackrel{\leftrightarrow}{K}_{1,n-1}$ be a symmetric-star with $n$ vertices which all the arcs are symmetric and have a common vertex (see $\stackrel{\leftrightarrow}{K}_{1,n-1}$ in Figure \ref{fi:ch-1}).
Let in-tree be a directed tree with $n$ vertices which the outdegree of each vertex of the directed tree is at most one. Then the in-tree has exactly one vertex with outdegree $0$ and such vertex is called the root of the in-tree (see Figure \ref{fi:ch-2}). Let $\infty[m_1,m_2,\ldots,m_t]$ be a generalized $\infty$-digraph with $n=\sum_{i=1}^t m_i-t+1$ $(m_i\geq2)$ vertices which has $t$ directed cycles $C_{m_i}$ with exactly one common vertex (see $\infty[m_1,m_2,\ldots,m_t]$ in Figure \ref{fi:ch-3}).
A $p$-spindle with $n$ vertices is the union of $p$ internally disjoint $(x,y)$-directed paths for some vertices $x$ and $y$. The vertex $x$ is said to be the initial vertex of spindle and $y$ its terminal vertex. A $(p+q)$-bispindle with $n$ vertices is the internally disjoint union of a $p$-spindle with initial vertex $x$ and terminal vertex $y$ and a $q$-spindle with initial vertex $y$ and terminal vertex $x$. Actually, it is the union of $p$ $(x,y)$-directed paths and $q$ $(y,x)$-directed paths. We denote the $(p+q)$-bispindle by $B[p,q]$ (see $B[p,q]$ in Figure \ref{fi:ch-3}).

\begin{figure}[htbp]
\begin{centering}
\includegraphics[scale=0.95]{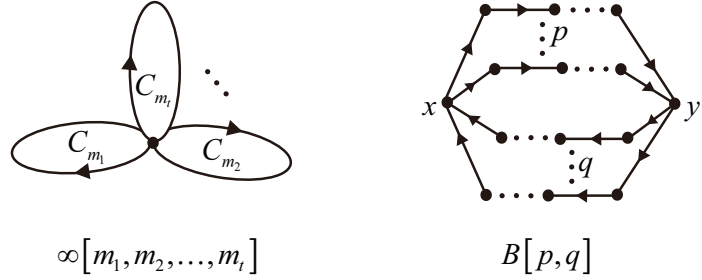}
\caption{A generalized $\infty$-digraph and a $(p+q)$-bispindle}\label{fi:ch-3}
\end{centering}
\end{figure}

\begin{figure}[htbp]
\begin{centering}
\includegraphics[scale=1.0]{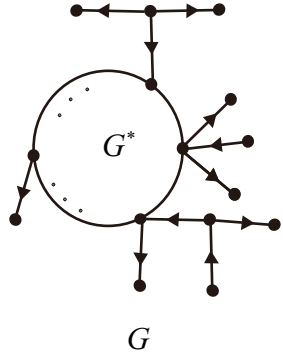}
\caption{A digraph $G\in\mathcal{G}_n^m$}\label{fi:ch-4}
\end{centering}
\end{figure}

Let $\mathcal{G}_n^m$ be the set of non-strongly connected digraphs with order $n$, which contain a unique strong component with order $m$ and some directed trees which are hung on each vertex of the strong component. For a non-strongly connected digraph $G\in\mathcal{G}_n^m$, we assume that $G^*$ is the unique strong component of $G$ with $m$ vertices and $T^{(i)}$ is the directed tree with $n_i$ vertices which hangs on each vertex of $G^*$, where $n=\sum_{i=1}^m n_i$ and $i=1,2,\ldots,m$. Then the vertex set of $G$ is $\mathcal{V}(G)=\bigcup_{i=1}^m \mathcal{V}(T^{(i)})$, where $\mathcal{V}(T^{(i)})=\{u_1^{(i)},u_2^{(i)},\ldots,u_{n_i}^{(i)}\}$, $\mathcal{V}(G^*)=\{v_1,v_2,\ldots,v_m\}$ and $v_i=u_1^{(i)}$, $i=1,2,\ldots,m$. Let $d_{G}^+(u_j^{(i)})$ be the outdegree of vertex $u_j^{(i)}$ of $G$ and $d_{G^*}^+(v_1)\geq d_{G^*}^+(v_2)\geq \cdots\geq d_{G^*}^+(v_m)$ be the outdegrees of vertices of $G^*$, where $i=1,2,\ldots,m$ and $j=1,2,\ldots,n_i$. We take an example in Figure \ref{fi:ch-4}.

\noindent\begin{definition}\label{de:ch-1.1} Let $G\in\mathcal{G}_n^m$ be a non-strongly connected digraph with $n$ vertices.

(i) Let
$$G'=G-\sum_{i=1}^m\sum_{s,t=1}^{n_i} (u_s^{(i)},u_t^{(i)})+\sum_{i=1}^m\sum_{j=2}^{n_i} (u_1^{(i)},u_j^{(i)}),$$
where $(u_s^{(i)},u_t^{(i)})\in\mathcal{A}(G)$, $i=1,2,\ldots,m$ and $s,t,j=1,2,\ldots,n_i$. Then $G'\in\mathcal{G}_n^m$ is a non-strongly connected digraph, which each directed tree $T^{(i)}$ is an out-star $\stackrel{\rightarrow}{K}_{1,n_i-1}$ and the centre of $\stackrel{\rightarrow}{K}_{1,n_i-1}$ is $v_i$ of $G^*$, where $i=1,2,\ldots,m$ (see $G'$ in Figure \ref{fi:ch-5}).

(ii) Let
\begin{align*}
G''&=G-\sum_{i=1}^m\sum_{s,t=1}^{n_i} (u_s^{(i)},u_t^{(i)})+\sum_{i=1}^m\sum_{j=2}^{n_i} (u_1^{(1)},u_j^{(i)})\\
&=G'-\sum_{i=2}^m\sum_{j=2}^{n_i} (u_1^{(i)},u_j^{(i)})+\sum_{i=2}^m\sum_{j=2}^{n_i} (u_1^{(1)},u_j^{(i)}),
\end{align*}
where $(u_s^{(i)},u_t^{(i)})\in\mathcal{A}(G)$, $i=1,2,\ldots,m$ and $s,t,j=1,2,\ldots,n_i$. Then $G''\in\mathcal{G}_n^m$ is a non-strongly connected digraph, which only has an out-star $\stackrel{\rightarrow}{K}_{1,n-m}$ and the centre of $\stackrel{\rightarrow}{K}_{1,n-m}$ is $v_1$ of $G^*$, $v_1$ is the maximal
outdegree vertex of $G^*$ and other directed tree $T^{(i)}$ is just a vertex $v_i$ of $G^*$ for $i=2,3,\ldots,m$ (see $G''$ in Figure \ref{fi:ch-5}).

(iii) Let $G'''\in\mathcal{G}_n^m$ be a non-strongly connected digraph by changing each directed tree in $G$ to an in-tree which the root of the in-tree is $v_i$ of $G^*$, where $i=1,2,\ldots,m$. We take an example in Figure \ref{fi:ch-6}.
\end{definition}

\begin{figure}[htbp]
\begin{centering}
\includegraphics[scale=1.0]{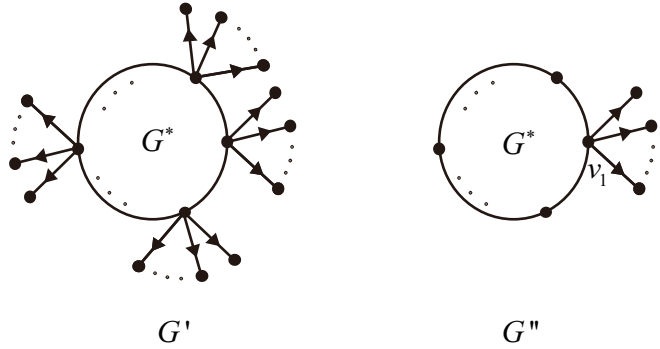}
\caption{Digraphs $G',G''\in\mathcal{G}_n^m$}\label{fi:ch-5}
\end{centering}
\end{figure}

\begin{figure}[htbp]
\begin{centering}
\includegraphics[scale=1.0]{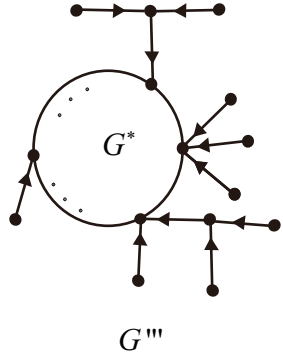}
\caption{A digraph $G'''\in\mathcal{G}_n^m$}\label{fi:ch-6}
\end{centering}
\end{figure}

The arrangement of this paper is as follows. In Section 2, we characterize the digraph which has the maximal $A_\alpha$ spectral radius in $\mathcal{G}_n^m$. In Section 3, we characterize the digraph which has the maximal (minimal) $A_\alpha$ energy in $\mathcal{G}_n^m$.

\section{The maximal $A_\alpha$ spectral radius of non-strongly connected digraphs}

In this section, we will consider the maximal $A_\alpha$ spectral radius of non-strongly connected digraphs in $\mathcal{G}_n^m$. Firstly, we list some known results used for later.

\noindent\begin{lemma}\label{le:ch-2.1}(\cite{HoJo}) Let $M$ be an $n\times n$ nonnegative irreducible matrix with spectral radius
$\rho(M)$ and row sums $R_1,R_2,\ldots,R_n$. Then
$$\underset{1\leq i\leq n}{\mathrm{min}} R_i\leq \rho(M)\leq \underset{1\leq i\leq n}{\mathrm{max}} R_i.$$
Moreover, one of the equalities holds if and only if the row sums of $M$ are all equal.
\end{lemma}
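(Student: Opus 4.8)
The plan is to reduce everything to the Perron--Frobenius machinery for nonnegative matrices. First I would fix a positive left Perron eigenvector $x=(x_1,\dots,x_n)^\top$ of $M$, which exists precisely because $M$ is nonnegative and irreducible; by Perron--Frobenius the associated eigenvalue is $\rho(M)>0$ and $x$ may be taken entrywise positive. Writing $\mathbf 1$ for the all-ones column vector, the $i$-th row sum is $R_i=(M\mathbf 1)_i$, so the whole statement is about comparing $M\mathbf 1$ with $\rho(M)\mathbf 1$.

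The key computation is to pair the eigenvector identity with $\mathbf 1$. On one hand $x^\top M\mathbf 1=\sum_i x_i R_i$; on the other hand $x^\top M=\rho(M)x^\top$, so $x^\top M\mathbf 1=\rho(M)\sum_i x_i$. Hence
\[
\rho(M)=\frac{\sum_{i=1}^n x_i R_i}{\sum_{i=1}^n x_i},
\]
i.e.\ $\rho(M)$ is a weighted average of the row sums $R_1,\dots,R_n$ with strictly positive weights $x_i$. A weighted average with positive weights always lies between the minimum and the maximum of the quantities being averaged, which gives $\min_i R_i\le\rho(M)\le\max_i R_i$ at once.

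For the equality characterization, suppose $\rho(M)=\max_i R_i$ (the $\min$ case is symmetric). From the weighted-average identity, $\sum_i x_i\bigl(\max_j R_j-R_i\bigr)=0$; every term is nonnegative and every weight $x_i$ is strictly positive, so $R_i=\max_j R_j$ for all $i$, i.e.\ all row sums are equal. Conversely, if all row sums equal some common value $R$, then $M\mathbf 1=R\mathbf 1$, so $\mathbf 1$ is an eigenvector with eigenvalue $R$; since $R\ge 0$ and $M$ is nonnegative irreducible, Perron--Frobenius forces $R=\rho(M)$ (the Perron eigenvalue is the only eigenvalue with a positive eigenvector). Thus both equalities are equivalent to the row sums being constant.

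The only genuine input is the existence of a strictly positive (left) Perron eigenvector and the uniqueness of the eigenvalue admitting a positive eigenvector; everything after that is the elementary fact that a positively weighted mean is squeezed between the extreme values, with equality iff the sample is constant. So there is no real obstacle here beyond correctly invoking Perron--Frobenius for irreducible nonnegative matrices; the main thing to be careful about is working with the \emph{left} eigenvector (row sums, not column sums) and noting that irreducibility is exactly what guarantees the eigenvector is strictly positive, which is what powers the equality case.
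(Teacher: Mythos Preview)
Your argument is correct and is in fact the standard proof: express $\rho(M)$ as a strictly-positively-weighted average of the row sums via the left Perron vector, then read off the inequalities and the equality case. There is nothing to compare against, however, because the paper does not prove Lemma~\ref{le:ch-2.1} at all---it merely quotes it from Horn and Johnson \cite{HoJo} as a known result and uses it as a black box later on. So your write-up supplies a self-contained proof where the paper offers none; the only minor polish I would suggest is that in the converse direction you need not argue ``$R\ge 0$ and Perron--Frobenius forces $R=\rho(M)$'' separately---the fact that $\mathbf 1$ is a \emph{positive} eigenvector already pins down the eigenvalue as the Perron root, regardless of sign considerations.
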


\noindent\begin{definition}\label{de:ch-2.2}(\cite{BePl}) Let $A =(a_{ij})$, $B=(b_{ij})$ be two $n\times n$ matrices. If $a_{ij}\leq b_{ij}$ for all $i$ and $j$, then $A\leq B$. If $A\leq B$ and $A\neq B$, then $A<B$. If $a_{ij}<b_{ij}$ for all $i$ and $j$, then $A\ll B$.
\end{definition}

\noindent\begin{lemma}\label{le:ch-2.3}(\cite{BePl}) Let $A =(a_{ij})$, $B=(b_{ij})$ be two $n\times n$ matrices with the spectral radius $\rho(A)$ and $\rho(B)$, respectively. If $0\leq A\leq B$, then $\rho(A)\leq\rho(B)$. Furthermore, If $0\leq A<B$ and $B$ is irreducible, then $\rho(A)<\rho(B)$.
\end{lemma}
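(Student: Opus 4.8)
The plan is to prove the two assertions separately, in both cases using Perron--Frobenius theory for nonnegative matrices: the weak inequality $\rho(A)\le\rho(B)$ comes from comparing powers entrywise, while the strict inequality comes from comparing the (strictly positive) Perron eigenvector of the irreducible matrix $B$ against a nonnegative left eigenvector of $A$.

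\textbf{Step 1: $\rho(A)\le\rho(B)$ when $0\le A\le B$.} Since $A$ and $B$ are nonnegative, a straightforward induction gives $0\le A^{k}\le B^{k}$ entrywise for every $k\ge1$; indeed $A^{k+1}=A\cdot A^{k}\le A\cdot B^{k}\le B\cdot B^{k}=B^{k+1}$, each inequality using nonnegativity of the factors. Hence, with the norm $\|M\|=\sum_{i,j}|m_{ij}|$, we get $\|A^{k}\|\le\|B^{k}\|$, and Gelfand's formula $\rho(M)=\lim_{k\to\infty}\|M^{k}\|^{1/k}$ yields $\rho(A)\le\rho(B)$. (Alternatively: perturb to $A+\varepsilon J\le B+\varepsilon J$, where $J$ is the all-ones matrix, so that both matrices are positive hence irreducible; apply the Collatz--Wielandt inequality to a Perron eigenvector of $A+\varepsilon J$; then let $\varepsilon\to0^{+}$ and use continuity of the spectral radius.)

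\textbf{Step 2: $\rho(A)<\rho(B)$ when $0\le A\le B$, $A\ne B$ and $B$ is irreducible.} By Perron--Frobenius, $B$ has an eigenvector $x\gg0$ with $Bx=\rho(B)x$, and $A$ has a left eigenvector $y\ge0$, $y\ne0$, with $y^{\top}A=\rho(A)y^{\top}$ (here $y$ may have zero entries, since $A$ need not be irreducible). Then
\[
\rho(A)\,(y^{\top}x)=y^{\top}(Ax)\le y^{\top}(Bx)=\rho(B)\,(y^{\top}x),
\]
and $y^{\top}x>0$ (as $x\gg0$, $y\ge0$, $y\ne0$), so $\rho(A)\le\rho(B)$. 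Assume $\rho(A)=\rho(B)$. Then the displayed inequality is an equality, so $y^{\top}(B-A)x=0$; since $y\ge0$, $x\gg0$ and $B-A\ge0$, this forces $y_{i}(B-A)_{ij}=0$ for all $i,j$. Put $S=\{i:y_{i}>0\}\ne\varnothing$. If $S=\{1,\dots,n\}$ then $B=A$, a contradiction. Otherwise $S$ is a proper nonempty subset: the rows of $B-A$ indexed by $S$ vanish, and for $j\notin S$ the $j$-th coordinate of $y^{\top}A=\rho(A)y^{\top}$ reads $\sum_{i\in S}y_{i}a_{ij}=0$, whence $a_{ij}=0$ and thus $b_{ij}=0$ for all $i\in S$, $j\notin S$. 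So the digraph of $B$ has no arc from $S$ to its complement, contradicting the irreducibility of $B$. Therefore $\rho(A)<\rho(B)$.

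\textbf{Main obstacle.} Step 1 is routine; the subtlety is the strictness in Step 2. One cannot reduce it to Step 1 by writing $A\le tB$ with $t<1$, because the entrywise ratios $a_{ij}/b_{ij}$ may still equal $1$ even though $A\ne B$, so the argument must go through eigenvectors. The real difficulty is that $A$ itself need not be irreducible, so its left Perron eigenvector $y$ carries a possibly nontrivial zero pattern $S$; the key observation is that the equality case confines the entire discrepancy $B-A$ (together with the off-$S$ support of the $S$-rows of $A$) inside $S$, making $S$ a closed vertex set in the digraph of $B$ --- which is precisely what irreducibility of $B$ forbids. This is the point at which the hypothesis is ``$B$ irreducible'' rather than ``$A$ irreducible''.
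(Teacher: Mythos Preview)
The paper does not supply its own proof of this lemma; it is quoted verbatim from Berman and Plemmons \cite{BePl} as a standard fact in Perron--Frobenius theory, so there is nothing to compare against. Your argument is correct and self-contained: Step~1 via entrywise monotonicity of powers together with Gelfand's formula is the usual route to the weak inequality, and in Step~2 you correctly pair the strictly positive right Perron eigenvector of the irreducible matrix $B$ with a nonnegative left Perron eigenvector of $A$, then show that the equality case forces a nonempty proper subset $S$ with $b_{ij}=0$ for all $i\in S$, $j\notin S$, contradicting irreducibility of $B$. The one point worth making explicit (which you use but do not state) is that a general nonnegative matrix $A$ always admits a nonnegative left eigenvector for the eigenvalue $\rho(A)$; this is the extended Perron--Frobenius theorem for not-necessarily-irreducible nonnegative matrices, and it is exactly what justifies the existence of your $y$.
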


\noindent\begin{lemma}\label{le:ch-2.4}(\cite{LiWC})  Let $G$ be a digraph with the $A_\alpha$ spectral radius $\rho_\alpha(G)$ and maximal outdegree $\Delta^+(G)$. If $H$ is a subdigraph of $G$, then $\rho_\alpha(H)\leq\rho_\alpha(G)$, especially, $\rho_\alpha(G)\geq\alpha\Delta^+(G)$. If $G$ is strongly connected and $H$ is a proper subdigraph of $G$, then $\rho_\alpha(H)<\rho_\alpha(G)$.
\end{lemma}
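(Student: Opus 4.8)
The whole statement reduces to entrywise monotonicity of the Perron root of nonnegative matrices, i.e.\ to Lemma~\ref{le:ch-2.3}. The starting observation is that $A_\alpha(G)=\alpha D^+(G)+(1-\alpha)A(G)$ is a nonnegative matrix (recall $\alpha\in[0,1)$), so by Perron--Frobenius its spectral radius is one of its eigenvalues and hence equals $\rho_\alpha(G)$; the same holds for $A_\alpha(H)$.

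First I would prove $\rho_\alpha(H)\le\rho_\alpha(G)$. Index $A_\alpha(H)$ by $\mathcal{V}(G)$, inserting a zero row and a zero column for each vertex of $\mathcal{V}(G)\setminus\mathcal{V}(H)$; this padding only adds extra zero eigenvalues, so it does not change the spectral radius. Because $\mathcal{A}(H)\subseteq\mathcal{A}(G)$ we get $d_H^+(v)\le d_G^+(v)$ for every $v\in\mathcal{V}(H)$, so the diagonal entries of the padded $A_\alpha(H)$ are at most those of $A_\alpha(G)$; and each off-diagonal entry of the padded $A_\alpha(H)$ equals $(1-\alpha)$ times the indicator of an arc of $H$, hence is at most the corresponding entry $(1-\alpha)\,[(v_i,v_j)\in\mathcal{A}(G)]$ of $A_\alpha(G)$. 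Thus $0\le A_\alpha(H)\le A_\alpha(G)$ entrywise, and Lemma~\ref{le:ch-2.3} gives $\rho_\alpha(H)\le\rho_\alpha(G)$. For the ``especially'' clause, pick a vertex $v$ with $d_G^+(v)=\Delta^+(G)$ and let $H$ be the subdigraph on $v$ together with its $\Delta^+(G)$ out-neighbours, keeping only the $\Delta^+(G)$ arcs leaving $v$; then $H\cong\stackrel{\rightarrow}{K}_{1,\Delta^+(G)}$ with centre $v$. Ordering $v$ first, $A_\alpha(H)$ is upper triangular with diagonal $(\alpha\Delta^+(G),0,\dots,0)$, so $\rho_\alpha(H)=\alpha\Delta^+(G)$, and the previous step yields $\rho_\alpha(G)\ge\alpha\Delta^+(G)$.

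Finally, for the strict inequality when $G$ is strongly connected: then $A(G)$ is irreducible, and since $1-\alpha>0$ the digraph associated with the matrix $A_\alpha(G)$ still contains every arc of $G$, so $A_\alpha(G)$ is irreducible. It remains to show that for a \emph{proper} subdigraph $H$ the inequality $A_\alpha(H)\le A_\alpha(G)$ above is in fact strict; then the strict part of Lemma~\ref{le:ch-2.3} applies and gives $\rho_\alpha(H)<\rho_\alpha(G)$. If $\mathcal{V}(H)\subsetneq\mathcal{V}(G)$, take $w\in\mathcal{V}(G)\setminus\mathcal{V}(H)$; since $G$ is strongly connected (and, to admit a proper subdigraph, has at least two vertices) $w$ has outdegree at least $1$, so if $\alpha>0$ the diagonal entry $\alpha d_G^+(w)$ of $A_\alpha(G)$ beats the zero entry of the padded $A_\alpha(H)$, while if $\alpha=0$ an arc $(w,x)\in\mathcal{A}(G)$ gives the entry $1$ in $A(G)$ against $0$ in $A(H)$. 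If instead $\mathcal{V}(H)=\mathcal{V}(G)$ but $\mathcal{A}(H)\subsetneq\mathcal{A}(G)$, an arc $(u,x)\in\mathcal{A}(G)\setminus\mathcal{A}(H)$ gives the positive entry $1-\alpha$ of $A_\alpha(G)$ against $0$ in $A_\alpha(H)$.

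I expect the only point requiring real care to be this last case distinction, and in particular the observation that strong connectivity forces minimum outdegree at least $1$, so that omitting a vertex genuinely kills at least one entry of the matrix; the rest is bookkeeping around the nonnegativity/irreducibility of $A_\alpha(\cdot)$ and Lemma~\ref{le:ch-2.3}.
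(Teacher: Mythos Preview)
Your argument is correct. The paper itself does not prove Lemma~\ref{le:ch-2.4}: it is quoted verbatim from \cite{LiWC} as a known tool, so there is no ``paper's own proof'' to compare against. Your route---padding $A_\alpha(H)$ to the vertex set of $G$, checking the entrywise inequality $0\le A_\alpha(H)\le A_\alpha(G)$, and invoking Lemma~\ref{le:ch-2.3} (with irreducibility of $A_\alpha(G)$ when $G$ is strongly connected for the strict part)---is exactly the standard derivation of this monotonicity fact, and the case analysis at the end is handled cleanly. The only cosmetic remark is that for the ``especially'' clause you do not even need the out-star subdigraph: the diagonal of $A_\alpha(G)$ already has $\alpha\Delta^+(G)$ as an entry, and the spectral radius of a nonnegative matrix dominates every diagonal entry (take $H$ to be the single vertex $v$ with $d_G^+(v)=\Delta^+(G)$ and all its outgoing arcs, or simply note $\rho(M)\ge M_{ii}$ for $M\ge 0$). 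Either way the conclusion is the same.
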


\noindent\begin{theorem}\label{th:ch-2.5}  Let $G\in\mathcal{G}_n^m$ be a non-strongly connected digraph with $\mathcal{V}(G)=\{v_1,v_2,\ldots,v_n\}$. Let $G^*$ be a unique strong component of $G$ with $\mathcal{V}(G^*)=\{v_1,v_2,\ldots,v_m\}$. Let $\lambda_{\alpha1}, \lambda_{\alpha2}, \ldots, \lambda_{\alpha n}$ be the eigenvalues of $A_\alpha(G)$ and $d_1^+, d_2^+, \ldots, d_n^+$ be the outdegrees of vertices of $G$. Then
$$\lambda_{\alpha i}=\alpha d_i^+,$$
for $i=m+1,m+2,\ldots,n$.
\end{theorem}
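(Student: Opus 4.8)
The plan is to analyze the block structure of $A_\alpha(G)$ induced by the strong component together with the hanging directed trees. Order the vertices so that the $m$ vertices of $G^*$ come first, followed by the remaining $n-m$ vertices lying in the hanging trees, with the crucial observation that no arc of $G$ goes from a tree vertex back toward $G^*$ — otherwise that tree vertex would lie in a strong component together with a vertex of $G^*$, contradicting that $G^*$ is the unique strong component and that the trees are acyclic. Hence $A(G)$, and therefore $A_\alpha(G)=\alpha D^+(G)+(1-\alpha)A(G)$, is block upper triangular with respect to this ordering: writing $A_\alpha(G)=\begin{pmatrix} A_\alpha(G^*)+\alpha\,\mathrm{diag}(t_i) & B \\ 0 & C \end{pmatrix}$, where $t_i$ counts the arcs from $v_i$ into its tree, $B$ records arcs from $G^*$ into the trees, and $C$ is the $(n-m)\times(n-m)$ principal submatrix on the tree vertices.

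Next I would argue that $C$ itself is block upper triangular, indeed that after a suitable ordering it is strictly upper triangular up to its diagonal. The tree vertices outside $G^*$ span a disjoint union of the forests $T^{(i)}-v_i$; each such forest is a directed acyclic graph, so one may order its vertices topologically (tails before heads or vice versa) to make the off-diagonal part of $C$ triangular. The diagonal entries of $C$ are exactly $\alpha d_G^+(u)$ for the tree vertices $u$, since the $(u,u)$-entry of $\alpha D^+(G)+(1-\alpha)A(G)$ is $\alpha d_G^+(u)$ (no loops). Because $C$ is triangular, its eigenvalues are precisely its diagonal entries $\alpha d_i^+$ for $i=m+1,\dots,n$.

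Finally, since $A_\alpha(G)$ is block upper triangular with diagonal blocks $A_\alpha(G^*)+\alpha\,\mathrm{diag}(t_i)$ and $C$, its spectrum is the union of the spectra of these two blocks. The spectrum of $C$ contributes exactly $\lambda_{\alpha i}=\alpha d_i^+$ for $i=m+1,\dots,n$, which is the claimed statement. I would present the argument cleanly by first establishing the no-back-arc fact, then invoking the standard result that the characteristic polynomial of a block triangular matrix is the product of the characteristic polynomials of the diagonal blocks, and then the standard fact that a triangular matrix has its diagonal as spectrum.

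The main obstacle is purely bookkeeping: making precise that the tree vertices can be simultaneously ordered so that $C$ is triangular, i.e. verifying that each component $T^{(i)}-v_i$ is acyclic as a subdigraph of $G$ (it is, being a subdigraph of the directed tree $T^{(i)}$) and that there are no arcs between different trees or from a tree to a different component. Once the no-back-arc observation is in place, everything else is the elementary linear algebra of triangular and block-triangular matrices, so I do not expect any real difficulty there.
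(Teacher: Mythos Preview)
Your overall strategy---triangularize and read off the diagonal---is the right one and is in the same spirit as the paper's argument. But your key structural claim is false as stated. You assert that no arc of $G$ goes from a tree vertex back into $G^*$, on the grounds that such a tree vertex would then lie in a strong component together with a vertex of $G^*$. That inference needs, in addition, a directed path from $G^*$ to the tree vertex, and no such path need exist. Concretely, let $T^{(i)}$ be an in-tree (every arc oriented toward the root $v_i$, as in the digraphs $G'''$ of Definition~\ref{de:ch-1.1}); then every arc of $T^{(i)}$ incident with $v_i$ points from a tree vertex into $v_i\in G^*$, yet none of those tree vertices is reachable from $G^*$, so there is no contradiction with $G^*$ being the unique strong component. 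In that case your lower-left block $A_{21}$ is nonzero and the $2\times2$ block upper-triangular form you wrote down simply does not hold.

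The fix is short and stays entirely within your framework: replace the coarse partition $V(G^*)\cup V(G\setminus G^*)$ by the Frobenius normal form. The strong components of $G$ are $G^*$ together with $n-m$ singletons (each non-root tree vertex is its own strong component, since the $T^{(i)}$ are acyclic). Topologically order the condensation; with respect to the induced vertex ordering $A_\alpha(G)$ becomes block upper triangular with one $m\times m$ diagonal block (the paper's $A_{11}$, i.e.\ your $A_\alpha(G^*)+\alpha\,\mathrm{diag}(t_i)$) and $n-m$ diagonal blocks of size $1$, the $i$-th of which is $\alpha d_i^+$. The factorization of the characteristic polynomial follows. The paper itself obtains the same factorization not via a global reordering but by an iterated Laplace expansion: among the vertices of $V_2$ there is always one with indegree $0$ or outdegree $0$ in $G$ (any underlying-graph leaf of some $T^{(i)}$ other than the root), so one expands $\det(xI-A_\alpha(G))$ along that column or row, peels off a factor $x-\alpha d_i^+$, and repeats.
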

\begin{proof}
Let $A_\alpha(G)=\alpha D^+(G)+(1-\alpha)A(G)$ be the $A_\alpha$-matrix of $G$. Let $\mathcal{V}(G)=V_1\bigcup V_2$ be the vertex set of $G$, where $V_1=\mathcal{V}(G^*)=\{v_1,v_2,\ldots,v_m\}$ and $V_2=\mathcal{V}(G-G^*)=\{v_{m+1},v_{m+2},\ldots,v_n\}$. According to the partition of vertex set of $G$, we partition $A_\alpha(G)$ into
$$
A_\alpha(G)=\left[
  \begin{array}{c|c}
   A_{11} & A_{12}\\ \hline
   A_{21} & A_{22}\\
  \end{array}
\right].
$$
The characteristic polynomial $\phi_{A_\alpha(G)}(x)$ of $G$ is $\phi_{A_\alpha(G)}(x)=|xI_n-A_\alpha(G)|$. Since the vertices of $V_2$ are not on the strong component, there must exist a vertex with indegree $0$ or outdegree $0$. Then the elements of column or row of $A_\alpha(G)$ corresponding to that vertex are all $0$, except the diagonal element. So by the property of the determinant, we have $\phi_{A_\alpha(G)}(x)=|xI_n-A_\alpha(G)|=|xI_n-A_{11}|\prod_{i=m+1}^n (x-\alpha d_i^+)$. Hence, $\lambda_{\alpha i}=\alpha d_i^+$, for $i=m+1,m+2,\ldots,n$.
\end{proof}

With the above theorem, we can get a more general result.

\noindent\begin{corollary}\label{co:ch-2.6}
Let $G$ be any digraph with $n$ vertices. Let $\lambda_{\alpha1}, \lambda_{\alpha2}, \ldots, \lambda_{\alpha n}$ be the eigenvalues of $A_\alpha(G)$ and $d_1^+, d_2^+, \ldots, d_n^+$ be the outdegrees of vertices of $G$. For any vertex $v_i$ which is not on the strong components of $G$, we have
$$\lambda_{\alpha i}=\alpha d_i^+.$$
\end{corollary}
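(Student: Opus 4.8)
The plan is to reduce the general statement to Theorem~\ref{th:ch-2.5} by a suitable ordering of the vertices, since the theorem already handles the case of a single strong component with trees hung on it, and the essential mechanism---a vertex off the strong components must have a ``source-free'' or ``sink-free'' direction---is exactly the same. First I would invoke the standard fact that the condensation (strong-component digraph) of $G$ is acyclic; hence its strong components admit a topological ordering $S_1, S_2, \ldots, S_k$ such that every arc between distinct components goes from a lower-indexed component to a higher-indexed one. Let $W = \mathcal{V}(G) \setminus \bigcup_j \mathcal{V}(S_j)$ restricted to those components that are single vertices not lying on any cycle; more precisely, let $v$ be any vertex not on a strong component (equivalently, $\{v\}$ is a trivial strong component). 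The key local claim is that in the induced acyclic structure, we can peel off such vertices one at a time.

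The main step is the following observation, which mirrors the determinant argument in the proof of Theorem~\ref{th:ch-2.5}: if $v$ is not on any strong component, then in the condensation $v$ is a trivial component, so either $v$ is a source of the condensation restricted to the ``remaining'' vertices or a sink of it. Concretely, order the vertices so that $v_1, \ldots, v_p$ are all the vertices not on strong components and $v_{p+1}, \ldots, v_n$ are the vertices lying on (nontrivial) strong components, with the $v_1, \ldots, v_p$ themselves listed in an order compatible with a topological sort of the whole condensation. Then for the first such vertex $v_1$ in this order, every arc incident to $v_1$ either enters $v_1$ from... wait---rather, I would argue: consider any vertex $v$ off the strong components; I would show that one can always choose an ordering in which, when we expand $|xI_n - A_\alpha(G)|$, the row or column corresponding to $v$ has only the diagonal entry $x - \alpha d^+_v$ nonzero. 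This holds because $v$, being a trivial strong component, can be taken to be either a source or a sink in an appropriate sub-condensation: if $v$ has no in-arcs from vertices not yet eliminated, its column (in $A_\alpha$) is zero off the diagonal; if $v$ has no out-arcs to vertices not yet eliminated, its row is zero off the diagonal.

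Thus I would carry out an induction on $p$, the number of vertices off the strong components. The base case $p = 0$ is vacuous. For the inductive step, pick $v_i$ to be a vertex off the strong components that is maximal in a topological order of the condensation (so it is a sink of the condensation among the off-component vertices, meaning all of its out-arcs, if any, go to... no): I would pick $v_i$ so that it has no out-arc within $G$ at all, OR no in-arc at all---but that need not exist. The cleaner route is: pick $v_i$ off the strong components that is a \emph{sink of the condensation} (every trivial sink component works) or, if none is off the strong components, pick one that is a \emph{source}; in the condensation DAG, the vertex set off the strong components is nonempty, and among the trivial components there is at least one that is a sink of the DAG or a source of the DAG---actually every DAG has a source and a sink, and if all sinks and all sources were nontrivial components we would still be fine because we can restrict to the sub-DAG induced by trivial components... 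This is the point that needs care. I expect the main obstacle to be phrasing the induction so that at each stage the chosen off-component vertex genuinely has an all-zero row or all-zero column in the current matrix; once that is set up, cofactor expansion along that row/column peels off the factor $(x - \alpha d^+_{v_i})$, the remaining matrix is the $A_\alpha$-type matrix of $G - v_i$ (with outdegrees unchanged for the surviving vertices, since $v_i$ had no surviving out-neighbour, or with a harmless column deletion), and the induction hypothesis finishes it. Alternatively, and perhaps most simply, I would note that $G$ restricted to its off-component vertices together with the strong components is exactly the situation of Theorem~\ref{th:ch-2.5} after contracting each nontrivial strong component is not literal, so instead I will present the direct cofactor-expansion induction as the proof.
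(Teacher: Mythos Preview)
Your opening move is exactly right: topologically order the strong components $S_1,\ldots,S_k$ of the condensation so that every arc of $G$ between distinct components goes from a lower-indexed component to a higher-indexed one. But you then abandon the immediate payoff of this ordering in favour of a one-vertex-at-a-time peeling induction, and that is where the gap lies. With the vertices listed block by block in the order $S_1,\ldots,S_k$, the matrix $A_\alpha(G)=\alpha D^+(G)+(1-\alpha)A(G)$ is block upper triangular: the diagonal part $D^+(G)$ is block diagonal, and $A(G)$ has no entry below the block diagonal because there is no arc from a later component to an earlier one. Hence
\[
\det(xI_n-A_\alpha(G))=\prod_{j=1}^{k}\det\bigl(xI-B_j\bigr),
\]
where $B_j$ is the diagonal block for $S_j$. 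If $S_j=\{v_i\}$ is a trivial component, then $B_j$ is the $1\times 1$ matrix $[\alpha d_i^+]$ (the diagonal entry records the outdegree in $G$, not in $S_j$), so the factor is $(x-\alpha d_i^+)$. That is the entire proof.

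Your peeling induction, by contrast, has a real obstruction that you yourself flag but do not resolve. A trivial strong component need not be a source or a sink of the condensation: take two disjoint directed cycles $C_1,C_2$ together with a single vertex $v$ and arcs $c_1\to v\to c_2$ for some $c_1\in C_1$, $c_2\in C_2$. Here $\{v\}$ is the only trivial component, yet $v$ has both an in-arc and an out-arc in $G$, so neither its row nor its column in $A_\alpha(G)$ is zero off the diagonal, and you cannot peel $v$ off first. Consequently your proposed ordering ``all trivial-component vertices first, then the rest'' is incompatible with any topological sort in general, and the claim that ``the remaining matrix is the $A_\alpha$-type matrix of $G-v_i$ with outdegrees unchanged'' is unjustified. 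The paper itself offers no proof of the corollary beyond pointing to Theorem~\ref{th:ch-2.5}; the determinant argument there relies on the off-component vertices forming directed trees (so that a leaf with in- or out-degree zero in $G$ always exists), which is precisely the hypothesis that fails in the general setting. The block upper triangular argument above is the clean fix, and it is already implicit in the topological ordering you wrote down at the start.
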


Next, we give our main results.

\noindent\begin{theorem}\label{th:ch-2.7} Let $G,G'\in\mathcal{G}_n^m$ be two non-strongly connected digraphs as defined in Definition \ref{de:ch-1.1}. Then $\rho_\alpha(G')\geq\rho_\alpha(G)$.
\end{theorem}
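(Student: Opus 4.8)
The plan is to compare $A_\alpha(G)$ and $A_\alpha(G')$ directly and then reduce the spectral-radius comparison to a comparison between the two strong-component blocks $A_{11}$ and $A_{11}'$, using Theorem~\ref{th:ch-2.5} to dispose of the eigenvalues coming from outside the strong component. First I would observe that the transformation from $G$ to $G'$ deletes all arcs lying inside each directed tree $T^{(i)}$ and replaces them by the arcs of an out-star centred at $v_i$; crucially this operation does not touch any arc incident with $G^*$ other than those emanating from the roots $v_i=u_1^{(i)}$, and it does not change the vertex set. Hence the strong component of $G'$ is still $G^*$ (it is unchanged as a subdigraph), so $G'\in\mathcal{G}_n^m$, and by Theorem~\ref{th:ch-2.5} both $A_\alpha(G)$ and $A_\alpha(G')$ have $n-m$ eigenvalues of the form $\alpha d_i^+$ coming from $G-G^*$, together with the eigenvalues of the $m\times m$ blocks $A_{11}$ and $A_{11}'$ respectively. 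Since $\rho_\alpha$ is attained (by Lemma~\ref{le:ch-2.4} and Perron--Frobenius applied to the irreducible block $A_{11}$) on the strong-component block when $m\ge 1$, it suffices to show $\rho(A_{11}')\ge\rho(A_{11})$ and that this dominates the $\alpha d_i^+$ terms.

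The key step is the comparison of the two blocks. Write the outdegree of $v_i$ in $G$ as $d_G^+(v_i)=d_{G^*}^+(v_i)+t_i$, where $t_i$ is the number of tree-arcs of $T^{(i)}$ whose tail is $v_i$; in $G'$ the corresponding outdegree becomes $d_{G'}^+(v_i)=d_{G^*}^+(v_i)+(n_i-1)$, and since $T^{(i)}$ has $n_i-1$ arcs in total we have $n_i-1\ge t_i$, so $d_{G'}^+(v_i)\ge d_{G}^+(v_i)$ with equality exactly when every tree-arc of $T^{(i)}$ already issues from $v_i$, i.e.\ $T^{(i)}$ is already the out-star. The off-diagonal structure of $A_{11}$ is exactly $(1-\alpha)$ times the adjacency matrix of $G^*$, which is the same in both digraphs (the arcs of the out-stars go into $V_2$, not into $V_1$, so they do not appear in $A_{11}$ at all). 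Therefore $A_{11}'=A_{11}+\alpha\,\mathrm{diag}(n_i-1-t_i)_{i=1}^m$, so $0\le A_{11}\le A_{11}'$ entrywise; applying Lemma~\ref{le:ch-2.3} gives $\rho(A_{11})\le\rho(A_{11}')$. Finally, since $A_\alpha(G')$ is block-triangular with the irreducible block $A_{11}'$ and scalar blocks $\alpha d_i^+$ on the diagonal, and since $v_i=u_1^{(i)}$ satisfies $d_{G'}^+(v_i)\ge\alpha d_j^+$-type bounds — more carefully, Lemma~\ref{le:ch-2.4} gives $\rho_\alpha(G')\ge\alpha\Delta^+(G')\ge\alpha d_j^+$ for every $j$ — we get $\rho_\alpha(G')=\max\{\rho(A_{11}'),\max_{i>m}\alpha d_i^+\}=\rho(A_{11}')\ge\rho(A_{11})$, and by the same reasoning $\rho_\alpha(G)=\rho(A_{11})$ (using that $G^*$ is strongly connected so $\rho(A_{11})\ge\alpha\Delta^+(G^*)$, and any large outdegree outside $G^*$ is inherited from a tree arc that in $G'$ only increases $\rho(A_{11}')$). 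Chaining these inequalities yields $\rho_\alpha(G')\ge\rho_\alpha(G)$.

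The main obstacle I anticipate is the bookkeeping needed to be sure that $\rho_\alpha(G)$ itself equals $\rho(A_{11})$ rather than some $\alpha d_i^+$ with $v_i\notin\mathcal{V}(G^*)$: a priori a vertex deep in a directed tree of $G$ could have huge outdegree (if $T^{(i)}$ is itself star-like but centred away from $v_i$). One must check that even in that case $\rho(A_{11})\ge \alpha d_i^+$ for all $i$ — this follows because $G^*$ together with that tree vertex and the path from $v_i$ to it forms a strongly... no: more simply, one shows $\rho(A_{11})\ge\alpha\,d_{G^*}^+(v_1)+(\text{something})$ is not needed; instead note that $A_\alpha(G)$ restricted to $G^*$ plus the out-arcs into the trees is a subdigraph argument via Lemma~\ref{le:ch-2.4}: $\rho_\alpha(G)\ge\rho_\alpha(G^*+\text{all tree arcs at }v_1)$, and the latter already exceeds every $\alpha d_i^+$ once we route all of $T^{(i)}$ as an out-star, which is precisely what $G'$ does. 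So the cleanest writeup first proves the block identity $A_{11}'=A_{11}+\alpha E$ with $E\ge 0$, concludes $\rho(A_{11})\le\rho(A_{11}')$ by Lemma~\ref{le:ch-2.3}, then invokes Corollary~\ref{co:ch-2.6}/Theorem~\ref{th:ch-2.5} and Lemma~\ref{le:ch-2.4} to identify $\rho_\alpha$ of each digraph with the spectral radius of its strong-component block, and finishes in one line.
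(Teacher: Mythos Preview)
Your overall strategy matches the paper's: reduce via Theorem~\ref{th:ch-2.5} to the two $m\times m$ blocks $A_{11}$ and $A_{11}'$, observe they share the off-diagonal part $(1-\alpha)A(G^*)$ while the diagonals satisfy $d_{G'}^+(v_i)=d_{G^*}^+(v_i)+(n_i-1)\ge d_{G^*}^+(v_i)+t_i=d_G^+(v_i)$, and conclude $\rho(A_{11})\le\rho(A_{11}')$ by Lemma~\ref{le:ch-2.3}. That part is fine.

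The gap is in the last step. You repeatedly try to argue that $\rho_\alpha(G)=\rho(A_{11})$, and your final summary still relies on ``identify $\rho_\alpha$ of each digraph with the spectral radius of its strong-component block''. This is \emph{false} for $G$ in general. Take $G^*=C_3$, attach to $v_1$ a tree $T^{(1)}$ on $100$ vertices consisting of a single arc $u_1^{(1)}\to u_2^{(1)}$ together with $98$ arcs out of $u_2^{(1)}$. Then the row sums of $A_{11}$ are at most $1+\alpha$, so $\rho(A_{11})\le 1+\alpha$, while $\alpha d_G^+(u_2^{(1)})=98\alpha$; for $\alpha$ near $1$ the latter is far larger, so $\rho_\alpha(G)\neq\rho(A_{11})$. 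Your ``obstacle'' paragraph senses this but then tries to lower-bound $\rho_\alpha(G)$, which goes in the wrong direction.

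The fix is not to prove that equality at all. The paper simply writes $\rho_\alpha(G)=\max\bigl\{\rho(A_{11}),\ \max_{i,j\ge 2}\alpha d_G^+(u_j^{(i)})\bigr\}$ and shows $\rho_\alpha(G')=\rho(A_{11}')$ dominates \emph{both} terms separately: the first by $A_{11}\le A_{11}'$ and Lemma~\ref{le:ch-2.3}, the second by $\rho_\alpha(G')\ge\alpha\Delta^+(G')\ge\alpha\Delta^+(G)$ from Lemma~\ref{le:ch-2.4}. You essentially have both ingredients already; you just need to assemble them this way instead of chasing the false identity. One small point you should also make explicit is why $\Delta^+(G')\ge\Delta^+(G)$: if the maximum outdegree of $G$ is realised at some $v_i\in G^*$ this follows from $d_{G'}^+(v_i)\ge d_G^+(v_i)$, and if it is realised at some tree vertex $u_j^{(i)}$ then $d_G^+(u_j^{(i)})\le n_i-1\le d_{G^*}^+(v_i)+(n_i-1)=d_{G'}^+(v_i)$.
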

\begin{proof}
By the definition of $G'$, we know $G'\in\mathcal{G}_n^m$ is a non-strongly connected digraph, which each directed tree $T^{(i)}$ is an out-star $\stackrel{\rightarrow}{K}_{1,n_i-1}$ and the centre of $\stackrel{\rightarrow}{K}_{1,n_i-1}$ is $v_i$ of $G^*$, where $i=1,2,\ldots,m$. Then $d_{G'}^+(v_i)=d_{G'}^+(u_1^{(i)})=d_{G^*}^+(v_i)+n_i-1$, $d_{G'}^+(u_j^{(i)})=0$, where $i=1,2,\ldots,m$ and $j=2,3,\ldots,n_i$.

First, we consider the $A_\alpha$-eigenvalues of $G'$. From Theorem \ref{th:ch-2.5}, for the vertex $u_j^{(i)}$ which is not on the strong component $G^*$, we have
$$\lambda_{\alpha j}^{(i)}(G')=\alpha d_{G'}^+(u_j^{(i)})=0,$$
where $i=1,2,\ldots,m$ and $j=2,3,\ldots,n_i$. For the vertex $v_i=u_1^{(i)}$ which is on the strong component $G^*$, the $A_\alpha$-eigenvalues $\lambda_{\alpha 1}^{(i)}(G')$ are equal to the eigenvalues of $A_{11}'$, where
$$A_{11}'=\alpha diag\left(d_{G^*}^+(v_1)+n_1-1,d_{G^*}^+(v_2)+n_2-1,\ldots,d_{G^*}^+(v_m)+n_m-1\right)+(1-\alpha)A(G^*).$$
Obviously, $\rho_\alpha(G')=\rho(A_{11}')$.

Next, we consider the $A_\alpha$-eigenvalues of $G$. From Theorem \ref{th:ch-2.5}, for the vertex $u_j^{(i)}$ which is not on the strong component $G^*$, we have
$$\lambda_{\alpha j}^{(i)}(G)=\alpha d_{G}^+(u_j^{(i)}),$$
where $i=1,2,\ldots,m$ and $j=2,3,\ldots,n_i$. For the vertex $v_i=u_1^{(i)}$ which is on the strong component $G^*$, the $A_\alpha$-eigenvalues $\lambda_{\alpha 1}^{(i)}(G)$ are equal to the eigenvalues of $A_{11}$, where
$$A_{11}=\alpha diag\left(d_{G}^+(v_1),d_{G}^+(v_2),\ldots,d_{G}^+(v_m)\right)+(1-\alpha)A(G^*).$$
Hence, $\rho_\alpha(G)=\underset{1\leq i\leq m,2\leq j\leq n_i}{\mathrm{max}}\left\{\rho(A_{11}), \alpha d_{G}^+(u_j^{(i)})\right\}$.

Finally, we prove
$$\rho_\alpha(G')=\rho(A_{11}')\geq\rho_\alpha(G)=\underset{1\leq i\leq m,2\leq j\leq n_i}{\mathrm{max}}\left\{\rho(A_{11}), \alpha d_{G}^+(u_j^{(i)})\right\}.$$
From Lemma \ref{le:ch-2.3}, since
$$d_{G^*}^+(v_i)+n_i-1\geq d_{G}^+(v_i),$$
we have $A_{11}'\geq A_{11}$. Then $\rho(A_{11}')\geq\rho(A_{11})$. From Lemma \ref{le:ch-2.4}, we have
$$\rho_\alpha(G')\geq\alpha \Delta^+(G') \geq \alpha \Delta^+(G) \geq \alpha d_{G}^+(u_j^{(i)}).$$

Therefore, we have $\rho_\alpha(G')\geq\rho_\alpha(G)$.
\end{proof}

\noindent\begin{theorem}\label{th:ch-2.8} Let $G',G''\in\mathcal{G}_n^m$ be two non-strongly connected digraphs as defined in Definition \ref{de:ch-1.1}. If $\alpha\in\left[\frac{d_{G^*}^+(v_1)}{d_{G^*}^+(v_1)+n-m-n_1+1},1\right)$, then $\rho_\alpha(G'')\geq\rho_\alpha(G')$; if $\alpha=0$, then $\rho_\alpha(G'')=\rho_\alpha(G')$.
\end{theorem}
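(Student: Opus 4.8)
The plan is to reduce to a comparison of two $m\times m$ matrices supported on the strong component. By Theorem~\ref{th:ch-2.5} and Corollary~\ref{co:ch-2.6} (exactly as in the proof of Theorem~\ref{th:ch-2.7}), the vertices outside $G^*$ are out-leaves and contribute only the eigenvalue $0$, so, since $\rho(A_{11}'),\rho(A_{11}'')\ge\alpha\Delta^+\ge 0$ by Lemma~\ref{le:ch-2.4},
\[
\rho_\alpha(G')=\rho(A_{11}'),\qquad \rho_\alpha(G'')=\rho(A_{11}''),
\]
where, writing $A_\alpha(G^*)=\alpha\,\mathrm{diag}\big(d_{G^*}^+(v_1),\ldots,d_{G^*}^+(v_m)\big)+(1-\alpha)A(G^*)$,
\[
A_{11}'=A_\alpha(G^*)+\alpha\,\mathrm{diag}(n_1-1,\ldots,n_m-1),\qquad
A_{11}''=A_\alpha(G^*)+\alpha(n-m)\,e_1e_1^{\top}.
\]
Set $k:=n-m-n_1+1=\sum_{i\ge 2}(n_i-1)$. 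Then $A_{11}'$ and $A_{11}''$ have the same off-diagonal part $(1-\alpha)A(G^*)$ and the same trace, and $A_{11}''$ arises from $A_{11}'$ by moving the total diagonal mass $\alpha k$ carried by $v_2,\dots,v_m$ onto the maximum-out-degree vertex $v_1$.

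If $\alpha=0$ then $A_{11}'=A(G^*)=A_{11}''$, hence $\rho_0(G')=\rho_0(G'')$. Now let $0<\alpha<1$; since $G^*$ is strongly connected, $A_{11}'$ and $A_{11}''$ are nonnegative and irreducible. In contrast to Theorem~\ref{th:ch-2.7}, these matrices are \emph{not} comparable entrywise (the $v_1$-diagonal entry grows, the $v_i$-diagonal entries for $i\ge 2$ shrink), so Lemma~\ref{le:ch-2.3} does not apply directly. Instead I would argue with the Perron eigenvector: let $x=(x_1,\ldots,x_m)^{\top}>0$ with $A_{11}'x=\rho' x$, $\rho':=\rho(A_{11}')=\rho_\alpha(G')$, and try to construct a positive vector $z$ with $A_{11}''z\ge\rho' z$ componentwise; the Perron--Frobenius inequality (the principle behind Lemma~\ref{le:ch-2.3}) then yields $\rho_\alpha(G'')=\rho(A_{11}'')\ge\rho'=\rho_\alpha(G')$.

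The natural $z$ is obtained from $x$ by decreasing the first coordinate (the mass moved onto $v_1$ over-feeds the $v_1$-row and under-feeds the rest) and, if necessary, shrinking the coordinates $z_i$, $i\ge 2$, by factors $c_i\in(0,1]$. Using $A_{11}'x=\rho' x$ row by row, the requirement $A_{11}''z\ge\rho' z$ becomes, at $v_i$ with $i\ge 2$, an inequality of the form $(\rho'-\alpha d_{G^*}^+(v_i))(1-c_i)\ge\alpha(n_i-1)$, which is feasible since $\rho'\ge\alpha d_{G'}^+(v_i)=\alpha\big(d_{G^*}^+(v_i)+n_i-1\big)$ by Lemma~\ref{le:ch-2.4}; and, at $v_1$, after using that Lemma~\ref{le:ch-2.4} also gives $\rho(A_{11}'')\ge\alpha\Delta^+(G'')=\alpha\big(d_{G^*}^+(v_1)+n-m\big)$, it reduces to
\[
\alpha\big(d_{G^*}^+(v_1)+n-m\big)\ \ge\ d_{G^*}^+(v_1)+\alpha(n_1-1),
\]
i.e. to $\alpha\big(d_{G^*}^+(v_1)+k\big)\ge d_{G^*}^+(v_1)$, which is precisely the hypothesis $\alpha\ge \frac{d_{G^*}^+(v_1)}{d_{G^*}^+(v_1)+n-m-n_1+1}$. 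The main obstacle is that these conditions are coupled through the off-diagonal part $A(G^*)$: shrinking $z_i$ for $i\ge 2$ and $z_1$ simultaneously affects every row, so one must control the ratios $x_1/x_i$ of the Perron vector of $A_{11}'$ and show that the first coordinate of $z$ can be chosen small enough for the rows $v_2,\ldots,v_m$ yet large enough for the row $v_1$; it is exactly the threshold on $\alpha$ that makes these demands compatible, with equality possible (for instance when $G^*$ admits an automorphism sending $v_1$ to another vertex of maximum out-degree).
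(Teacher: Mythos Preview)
Your Perron-vector strategy is not completed: you explicitly flag the coupling of the $z_i$'s through the off-diagonal part $(1-\alpha)A(G^*)$ as ``the main obstacle'' and then offer only a heuristic about controlling the ratios $x_1/x_i$. No concrete choice of $z$ is ever produced, and the row-by-row inequalities you write down (e.g.\ $(\rho'-\alpha d_{G^*}^+(v_i))(1-c_i)\ge\alpha(n_i-1)$) tacitly assume the off-diagonal contributions are unchanged, which they are not once any coordinate is rescaled. At the $v_1$-row you switch from the test-vector inequality $(A_{11}''z)_1\ge\rho' z_1$ to a bound on $\rho(A_{11}'')$ itself, which is the conclusion you are trying to reach, not a step toward it. So as it stands the argument is a sketch with a real gap.

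The paper bypasses Perron vectors entirely and uses a much coarser pair of scalar bounds. On one side it keeps exactly your observation $\rho_\alpha(G'')\ge\alpha\Delta^+(G'')=\alpha\big(d_{G^*}^+(v_1)+n-m\big)$ from Lemma~\ref{le:ch-2.4}. On the other side it applies Lemma~\ref{le:ch-2.1} (max row sum) to the irreducible matrix $A_{11}'$: the $i$-th row sum is $(1-\alpha)d_{G^*}^+(v_i)+\alpha\big(d_{G^*}^+(v_i)+n_i-1\big)=d_{G^*}^+(v_i)+\alpha(n_i-1)$, so
\[
\rho_\alpha(G')=\rho(A_{11}')\le\max_{1\le i\le m}\big\{d_{G^*}^+(v_i)+\alpha(n_i-1)\big\}.
\]
The whole proof then reduces to the purely scalar comparison
\[
\alpha\big(d_{G^*}^+(v_1)+n-m\big)\ \ge\ \max_{1\le i\le m}\big\{d_{G^*}^+(v_i)+\alpha(n_i-1)\big\},
\]
which the paper verifies under the stated hypothesis on $\alpha$ (and for $\alpha=0$ both sides coincide with $\rho(A(G^*))$, as you note). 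You actually isolated the key scalar inequality $\alpha\big(d_{G^*}^+(v_1)+n-m\big)\ge d_{G^*}^+(v_1)+\alpha(n_1-1)$; the missing idea is simply to replace the eigenvector comparison by the row-sum upper bound of Lemma~\ref{le:ch-2.1}, which eliminates the coupling problem in one stroke.
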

\begin{proof}
By the definition of $G''$, we know $G''\in\mathcal{G}_n^m$ is a non-strongly connected digraph, which only has an out-star $\stackrel{\rightarrow}{K}_{1,n-m}$ and the centre of $\stackrel{\rightarrow}{K}_{1,n-m}$ is $v_1$ of $G^*$, $v_1$ is the maximal
outdegree vertex of $G^*$ and each other directed tree $T^{(i)}$ is just a vertex $v_i$ of $G^*$ for $i=2,3,\ldots,m$. The vertex set $\mathcal{V}(G'')=\mathcal{V}(\stackrel{\rightarrow}{K}_{1,n-m})\bigcup(\mathcal{V}(G^*)-v_1)$, where $\mathcal{V}(\stackrel{\rightarrow}{K}_{1,n-m})=\{u_1^{(1)},u_2^{(1)},\ldots,u_{n-m+1}^{(1)}\}$, $\mathcal{V}(G^*)=\{v_1,v_2,\ldots,v_m\}$ and $u_1^{(1)}=v_1$. Then $d_{G''}^+(v_1)=d_{G^*}^+(v_1)+n-m$, $d_{G''}^+(u_j^{(1)})=0$ and $d_{G''}^+(v_i)=d_{G^*}^+(v_i)$, where $i=2,3,\ldots,m$ and $j=2,3,\ldots,n-m+1$. Since $d_{G^*}^+(v_1)\geq d_{G^*}^+(v_2)\geq \cdots\geq d_{G^*}^+(v_m)$, by Lemma \ref{le:ch-2.4}, we have
$$\rho_\alpha(G'') \geq \alpha \Delta^+(G'')=\alpha\left(d_{G^*}^+(v_1)+n-m\right).$$

From the proof of Theorem \ref{th:ch-2.7}, we have $\rho_\alpha(G')=\rho(A_{11}')$. By Lemma \ref{le:ch-2.1}, we have
$$\underset{1\leq i\leq m}{\mathrm{min}} R_i(A_{11}')\leq \rho(A_{11}')\leq \underset{1\leq i\leq m}{\mathrm{max}} R_i(A_{11}').$$
Then
$$\rho_\alpha(G')\leq \underset{1\leq i\leq m}{\mathrm{max}} \left\{(1-\alpha)d_{G^*}^+(v_i)+\alpha\left(d_{G^*}^+(v_i)+n_i-1\right)\right\}=\underset{1\leq i\leq m}{\mathrm{max}} \left\{d_{G^*}^+(v_i)+\alpha(n_i-1)\right\}.$$
Without loss of generality, let $\underset{1\leq i\leq m}{\mathrm{max}} \left\{d_{G^*}^+(v_i)+\alpha(n_i-1)\right\}=d_{G^*}^+(v_t)+\alpha(n_t-1)$. That is $d_{G^*}^+(v_t)+\alpha(n_t-1)\geq d_{G^*}^+(v_1)+\alpha(n_1-1)$.

If $\alpha\neq0$, we have $n_t\geq\frac{d_{G^*}^+(v_1)-d_{G^*}^+(v_t)+\alpha n_1}{\alpha}$. Next, we prove
$$\rho_\alpha(G'')\geq \alpha\left(d_{G^*}^+(v_1)+n-m\right)\geq d_{G^*}^+(v_t)+\alpha(n_t-1)\geq \rho_\alpha(G').$$
We only need to prove
$$\alpha\left(d_{G^*}^+(v_1)+n-m\right)\geq d_{G^*}^+(v_t)+\alpha\left(\frac{d_{G^*}^+(v_1)-d_{G^*}^+(v_t)+\alpha n_1}{\alpha}-1\right).$$
That is, $1>\alpha\geq\frac{d_{G^*}^+(v_1)}{d_{G^*}^+(v_1)+n-m-n_1+1}$.

If $\alpha=0$, we have $\rho_0(G')=\rho(A(G^*))$ and $\rho_0(G'')=\rho(A(G^*))$, then $\rho_\alpha(G'')=\rho_\alpha(G')$.

Therefore, if $\alpha\in\left[\frac{d_{G^*}^+(v_1)}{d_{G^*}^+(v_1)+n-m-n_1+1},1\right)$, then $\rho_\alpha(G'')\geq\rho_\alpha(G')$; if $\alpha=0$, then $\rho_\alpha(G'')=\rho_\alpha(G')$.
\end{proof}

From Theorems \ref{th:ch-2.7} and \ref{th:ch-2.8}, we have the following theorem.

\noindent\begin{theorem}\label{th:ch-2.9} Among all digraphs in $\mathcal{G}_n^m$, if $\alpha\in\left[\frac{d_{G^*}^+(v_1)}{d_{G^*}^+(v_1)+n-m-n_1+1},1\right)$ or $\alpha=0$, then $G''$ is a digraph which has the maximal $A_\alpha$ spectral radius.
\end{theorem}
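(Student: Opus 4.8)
The plan is to obtain Theorem \ref{th:ch-2.9} as an immediate consequence of the two monotonicity results just proved, Theorems \ref{th:ch-2.7} and \ref{th:ch-2.8}. First I would take an arbitrary $G\in\mathcal{G}_n^m$ and associate to it the two derived digraphs $G'$ and $G''$ from Definition \ref{de:ch-1.1}: $G'$ replaces every directed tree hung on a vertex of the strong component $G^*$ by an out-star centred at that vertex, and $G''$ further coalesces all of these out-stars into the single out-star $\stackrel{\rightarrow}{K}_{1,n-m}$ centred at the maximum-outdegree vertex $v_1$ of $G^*$, leaving each remaining vertex $v_i$ ($i\ge2$) of $G^*$ as a trivial tree. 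Both $G'$ and $G''$ again lie in $\mathcal{G}_n^m$, so all three digraphs are legitimate members of the family.

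Next I would simply chain the inequalities. By Theorem \ref{th:ch-2.7}, which is unconditional in $\alpha$, we have $\rho_\alpha(G')\ge\rho_\alpha(G)$. By Theorem \ref{th:ch-2.8}, under the hypothesis $\alpha\in\left[\frac{d_{G^*}^+(v_1)}{d_{G^*}^+(v_1)+n-m-n_1+1},1\right)$ we have $\rho_\alpha(G'')\ge\rho_\alpha(G')$, while for $\alpha=0$ we have $\rho_\alpha(G'')=\rho_\alpha(G')$. Combining, in either case $\rho_\alpha(G'')\ge\rho_\alpha(G')\ge\rho_\alpha(G)$, and since $G$ was arbitrary in $\mathcal{G}_n^m$ this is exactly the claim that $G''$ attains the maximal $A_\alpha$ spectral radius in the family.

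The only point that needs a careful word — rather than a hard argument — is the bookkeeping about which digraph is the maximiser: with the strong component $G^*$ (hence its outdegree sequence $d_{G^*}^+(v_1)\ge\cdots\ge d_{G^*}^+(v_m)$) held fixed, the digraph $G''$ is the same object regardless of which $G$ one starts from, so the statement ``$G''$ has the maximal $A_\alpha$ spectral radius in $\mathcal{G}_n^m$'' is meaningful. I would also briefly record that the threshold in the hypothesis is at most $1$, since $n-m-n_1+1=\sum_{i=1}^m(n_i-1)-(n_1-1)\ge0$, so the interval for $\alpha$ is nonempty; and that the case $\alpha=0$ is genuinely degenerate in that $\rho_0(H)=\rho(A(G^*))$ for every $H\in\mathcal{G}_n^m$ with strong component $G^*$, which is why equality rather than strict inequality appears there. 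There is essentially no obstacle beyond this: the whole content sits in Theorems \ref{th:ch-2.7} and \ref{th:ch-2.8}, and Theorem \ref{th:ch-2.9} is their formal synthesis.
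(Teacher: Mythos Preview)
Your proposal is correct and follows exactly the paper's approach: the paper's proof of Theorem~\ref{th:ch-2.9} is the single line ``From Theorems~\ref{th:ch-2.7} and~\ref{th:ch-2.8}, we have the following theorem,'' and your chaining $\rho_\alpha(G'')\ge\rho_\alpha(G')\ge\rho_\alpha(G)$ is precisely that. Your additional remarks on the well-definedness of $G''$ once $G^*$ is fixed, the nonemptiness of the $\alpha$-interval, and the degeneracy at $\alpha=0$ are clarifications the paper does not spell out but which are consistent with it.
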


We only obtain $G''$ is a digraph having the maximal $A_\alpha$ spectral radius in $\mathcal{G}_n^m$ when $\alpha\in\left[\frac{d_{G^*}^+(v_1)}{d_{G^*}^+(v_1)+n-m-n_1+1},1\right)$ or $\alpha=0$. However, we know $\frac{d_{G^*}^+(v_1)}{d_{G^*}^+(v_1)+n-m-n_1+1}=\frac{d_{G^*}^+(v_1)}{d_{G^*}^+(v_1)+\sum_{i=2}^{m}(n_i-1)}$. The bigger $\sum_{i=2}^{m}(n_i-1)$ is, the smaller $\frac{d_{G^*}^+(v_1)}{d_{G^*}^+(v_1)+n-m-n_1+1}$ is. And if $\sum_{i=2}^{m}(n_i-1)\rightarrow0$, then $n_1-1\rightarrow n-m$ and $G'\rightarrow G''$. So we think the result is true for all $\alpha\in[0,1)$. Therefore, we give the following conjecture.

\noindent\begin{conjecture}\label{co:ch-2.10}  Among all digraphs in $\mathcal{G}_n^m$, for any $\alpha\in[0,1)$, $G''$ is a digraph which has the maximal $A_\alpha$ spectral radius.
\end{conjecture}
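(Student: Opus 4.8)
The plan is to strip off the uninteresting part of the spectrum and reduce Conjecture~\ref{co:ch-2.10} to a matrix extremal problem, then attack that problem by convexity. Let $G\in\mathcal G_n^m$ be arbitrary, with strong component $G^*$ on $\{v_1,\dots,v_m\}$ and $d_{G^*}^+(v_1)\ge\cdots\ge d_{G^*}^+(v_m)$. By Theorem~\ref{th:ch-2.5} the eigenvalues of $A_\alpha(G)$ attached to vertices off $G^*$ are the numbers $\alpha d_i^+$, each with $d_i^+\le n-m$, while Lemma~\ref{le:ch-2.4} gives $\rho_\alpha(G'')\ge\alpha\Delta^+(G'')=\alpha(d_{G^*}^+(v_1)+n-m)\ge\alpha(n-m)$; hence those eigenvalues never exceed $\rho_\alpha(G'')$. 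Write $A_{11}^G=\alpha\,\mathrm{diag}(d_G^+(v_1),\dots,d_G^+(v_m))+(1-\alpha)A(G^*)$ for the block of $A_\alpha(G)$ indexed by $\mathcal V(G^*)$; then $d_G^+(v_i)=d_{G^*}^+(v_i)+a_i$ with $a_i\ge0$ and $\sum_i a_i\le n-m$, and enlarging the $a_i$ only increases the spectral radius (Lemma~\ref{le:ch-2.3}), so it is enough to treat $\sum_i a_i=n-m$. Thus the conjecture is equivalent to: over all $\mathbf a\ge0$ with $\sum_i a_i=n-m$,
$$\rho\bigl(A_\alpha(G^*)+\alpha\,\mathrm{diag}(\mathbf a)\bigr)\ \le\ \rho\bigl(A_\alpha(G^*)+\alpha(n-m)\,e_1e_1^{\mathsf T}\bigr)=\rho_\alpha(G'').$$

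For the matrix problem put $M(\mathbf a)=A_\alpha(G^*)+\alpha\,\mathrm{diag}(\mathbf a)$, a nonnegative irreducible matrix whose Perron value coincides with its spectral abscissa. By Cohen's theorem that the dominant eigenvalue of an essentially nonnegative matrix is convex in its diagonal entries, $\mathbf a\mapsto\rho(M(\mathbf a))$ is convex on the simplex $\Sigma=\{\mathbf a\ge0:\sum_i a_i=n-m\}$, hence attains its maximum at an extreme point $\mathbf a=(n-m)e_j$. Since $(n-m)e_1\in\Sigma$ is integral, it then suffices to prove the single-vertex comparison
$$\rho\bigl(A_\alpha(G^*)+\alpha(n-m)\,e_1e_1^{\mathsf T}\bigr)\ \ge\ \rho\bigl(A_\alpha(G^*)+\alpha(n-m)\,e_je_j^{\mathsf T}\bigr),\qquad 1\le j\le m,$$
i.e.\ that concentrating the whole surplus outdegree $n-m$ at the maximum-outdegree vertex $v_1$ of $G^*$ beats concentrating it anywhere else. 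This is the crux, and where I expect the real difficulty to lie.

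When $\alpha$ is close to $1$ the comparison is immediate because $\rho\bigl(A_\alpha(G^*)+\alpha(n-m)e_je_j^{\mathsf T}\bigr)=\alpha(n-m)+\alpha\,d_{G^*}^+(v_j)+o(1)$, which is exactly the mechanism of Theorem~\ref{th:ch-2.8}; so the obstruction sits near $\alpha=0$. There first-order perturbation theory around $A(G^*)$ gives
$$\rho\bigl(A_\alpha(G^*)+\alpha(n-m)e_je_j^{\mathsf T}\bigr)=\rho(A(G^*))+\alpha\Bigl(\sum\nolimits_i d_{G^*}^+(v_i)\,\xi_i\eta_i-\rho(A(G^*))+(n-m)\,\xi_j\eta_j\Bigr)+O(\alpha^2),$$
where $\xi,\eta$ are the left and right Perron vectors of $A(G^*)$ with $\xi^{\mathsf T}\eta=1$. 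The only $j$-dependent term is $(n-m)\,\xi_j\eta_j$, so for small $\alpha$ the optimal vertex is the one maximising $\xi_j\eta_j$, which for the adjacency matrix of a digraph need not be the maximum-outdegree vertex. Closing the gap therefore seems to require either an extra hypothesis on $G^*$ forcing $v_1$ to maximise $\xi_j\eta_j$ as well (outdegree-regularity, or enough symmetry, would do), or a global rather than perturbative comparison of the two Perron values. I would attempt the latter by studying the convex path $t\mapsto\rho\bigl(A_\alpha(G^*)+\alpha(n-m)((1-t)e_je_j^{\mathsf T}+te_1e_1^{\mathsf T})\bigr)$ together with Collatz--Wielandt test-vector estimates built from the Perron vector of the competitor $M((n-m)e_j)$; but convexity of this path only controls its endpoints and the sign of its derivative at $t=0$ is not evidently in our favour, so I suspect Conjecture~\ref{co:ch-2.10} as stated needs an additional assumption on $G^*$ (or a genuinely new idea), and that without one it may even fail for small $\alpha$.
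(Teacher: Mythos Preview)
The statement you are addressing is labelled as a \emph{conjecture} in the paper, and indeed the paper offers no proof of it. The authors only establish the partial range in Theorem~\ref{th:ch-2.9} (large $\alpha$, together with the trivial case $\alpha=0$) and then give a short heuristic paragraph explaining why they \emph{believe} the full range should hold, before stating Conjecture~\ref{co:ch-2.10}. So there is no ``paper's own proof'' to compare against.

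Your reduction goes well beyond what the paper does. Stripping off the non-$G^*$ eigenvalues via Theorem~\ref{th:ch-2.5} and then invoking Cohen's convexity theorem to reduce the problem on the simplex $\{\mathbf a\ge 0:\sum a_i=n-m\}$ to its extreme points $(n-m)e_j$ is a clean and correct piece of analysis; the paper's Theorems~\ref{th:ch-2.7} and~\ref{th:ch-2.8} together amount only to the much weaker statement that one can pass from $G$ to $G'$ (all trees become out-stars) and then, for $\alpha$ large enough, from $G'$ to $G''$, using nothing sharper than row-sum bounds (Lemma~\ref{le:ch-2.1}) and $\rho_\alpha\ge\alpha\Delta^+$.

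Your identification of the crux---the single-vertex comparison between placing the whole mass at $v_1$ versus at some other $v_j$---and your first-order perturbation computation are both correct. The derivative at $\alpha=0$ of $\rho\bigl(A_\alpha(G^*)+\alpha(n-m)e_je_j^{\mathsf T}\bigr)$ has $j$-dependent part $(n-m)\xi_j\eta_j$, and there is no general reason why the product of left and right Perron coordinates of $A(G^*)$ should be maximised at a vertex of maximum outdegree. Your conclusion that the conjecture, as stated for arbitrary strongly connected $G^*$, may require an additional hypothesis (or may fail for small positive $\alpha$) is a legitimate and well-supported reservation; it is not a gap in your argument but rather evidence that the conjecture itself is open for good reason.
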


\section{The maximal $A_\alpha$ energy of non-strongly connected digraphs}

In this section, we will consider the maximal $A_\alpha$ energy of non-strongly connected digraphs in $\mathcal{G}_n^m$. Firstly, we will introduce some basic concepts of $A_\alpha$ energy of digraphs.

Let $E_\alpha(G)$ be $A_\alpha$ energy of a digraph $G$. By using second spectral moment, Xi \cite{Xi} defined the $A_\alpha$ energy as $E_\alpha(G)=\sum_{i=1}^n \lambda_{\alpha i}^2$, where $\lambda_{\alpha i}$ is an eigenvalue of $A_\alpha(G)$. She also obtained the following result.

\noindent\begin{lemma}\label{le:ch-3.1}(\cite{Xi}) Let $G$ be a connected digraph with $n$ vertices and $c_2$ be the number of all closed walks of length $2$. Let $d_1^+,d_2^+,\ldots,d_n^+$ be the outdegrees of vertices of $G$. Then
$$E_\alpha(G)=\sum_{i=1}^n \lambda_{\alpha i}^2=\alpha^2\sum_{i=1}^n(d_i^+)^2+(1-\alpha)^2c_2.$$
\end{lemma}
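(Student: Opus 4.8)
The statement to be proved is Lemma~\ref{le:ch-3.1}: for a connected digraph $G$ with $n$ vertices, $E_\alpha(G)=\sum_{i=1}^n\lambda_{\alpha i}^2=\alpha^2\sum_{i=1}^n(d_i^+)^2+(1-\alpha)^2c_2$. The natural approach is to use the standard fact that the sum of squares of all eigenvalues of a square matrix $M$ equals $\mathrm{tr}(M^2)$, since $\sum_i\lambda_i(M)^2=\sum_i\lambda_i(M^2)=\mathrm{tr}(M^2)$ (the eigenvalues of $M^2$ being the squares of those of $M$, with multiplicity). So I would first write $E_\alpha(G)=\mathrm{tr}\big(A_\alpha(G)^2\big)$, and then expand $A_\alpha(G)=\alpha D^+(G)+(1-\alpha)A(G)$.

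Next I would expand the square:
\[
A_\alpha(G)^2=\alpha^2 D^+(G)^2+\alpha(1-\alpha)\big(D^+(G)A(G)+A(G)D^+(G)\big)+(1-\alpha)^2A(G)^2,
\]
and take the trace term by term. For the first term, $D^+(G)$ is diagonal with entries $d_i^+$, so $\mathrm{tr}(D^+(G)^2)=\sum_{i=1}^n(d_i^+)^2$. For the cross term, $D^+(G)A(G)$ has $(i,i)$-entry $d_i^+a_{ii}=0$ since $G$ has no loops (so the diagonal of $A(G)$ vanishes); hence $\mathrm{tr}(D^+(G)A(G))=0$ and likewise $\mathrm{tr}(A(G)D^+(G))=0$. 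For the last term, $\mathrm{tr}(A(G)^2)=\sum_{i,j}a_{ij}a_{ji}$, which counts exactly the ordered pairs $(i,j)$ with both $(v_i,v_j)$ and $(v_j,v_i)$ arcs — i.e.\ the number of closed walks of length $2$, which is $c_2$. Collecting the three contributions gives $\alpha^2\sum_i(d_i^+)^2+0+(1-\alpha)^2c_2$, as claimed.

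The only subtle point, and the one I would flag as the main thing to get right, is the interpretation of $\mathrm{tr}(A(G)^2)$ as $c_2$: one must be careful that $c_2$ as defined in the introduction counts closed walks $v_i\to v_j\to v_i$ of length $2$, and that each symmetric pair of arcs $\{(v_i,v_j),(v_j,v_i)\}$ contributes two such closed walks (one starting at $v_i$, one at $v_j$), matching the two diagonal entries $(A^2)_{ii}$ and $(A^2)_{jj}$ it feeds. Since $A(G)$ is a $(0,1)$-matrix with zero diagonal, $\mathrm{tr}(A(G)^2)=\sum_{i\ne j}a_{ij}a_{ji}$ is precisely this count, so the identification is clean. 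Everything else is a routine computation, and connectedness of $G$ plays no essential role beyond ensuring the objects are the ones under study; the identity holds formally for any loopless digraph.
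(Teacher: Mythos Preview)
Your argument is correct. The key identity $\sum_i \lambda_{\alpha i}^2=\mathrm{tr}\big(A_\alpha(G)^2\big)$ follows from the fact that the trace equals the sum of eigenvalues (with algebraic multiplicity) and that the eigenvalues of $M^2$ are the squares of those of $M$; the subsequent expansion, the vanishing of the cross terms via $a_{ii}=0$, and the identification $\mathrm{tr}(A(G)^2)=c_2$ are all handled cleanly. Your remark that connectedness is inessential is also accurate.

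Note, however, that the paper does not supply its own proof of this lemma: it is quoted from \cite{Xi} and stated without argument. So there is no ``paper's proof'' to compare against here. That said, the trace-of-the-square computation you give is the standard (and essentially the only natural) route to this identity, and is almost certainly what the cited source does as well.
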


Obviously, we can get the following results.

\noindent\begin{theorem}\label{th:ch-3.2} Let $G$ be a connected digraph with $n$ vertices and $e$ arcs. Let $d_1^+,d_2^+,\ldots,d_n^+$ be the outdegrees of vertices of $G$.

(i) If $G$ is a simple digraph, then
$$E_\alpha(G)=\alpha^2\sum_{i=1}^n (d_i^+)^2.$$

(ii) If $G$ is a symmetric digraph, then
$$E_\alpha(G)=\alpha^2\sum_{i=1}^n (d_i^+)^2+(1-\alpha)^2 e.$$
\end{theorem}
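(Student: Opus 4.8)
The plan is to derive both parts as immediate consequences of Lemma~\ref{le:ch-3.1}, so the whole argument reduces to computing $c_2$, the number of closed walks of length $2$, in each of the two special cases. A closed walk of length $2$ is a sequence $v_i, v_j, v_i$ with $(v_i,v_j)$ and $(v_j,v_i)$ both arcs of $G$; equivalently, $c_2$ counts ordered pairs $(i,j)$ such that $(v_i,v_j)$ and $(v_j,v_i)$ are both arcs. Note that this count includes the possibility $i=j$, but since the paper assumes $G$ has no loops, the diagonal contributes nothing, so $c_2$ is exactly twice the number of symmetric pairs of arcs $\{(v_i,v_j),(v_j,v_i)\}$ in $G$.

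For part (i), if $G$ is simple then by definition no arc $(v_i,v_j)$ has its reverse $(v_j,v_i)$ also present in $G$; hence there are no symmetric pairs and $c_2=0$. Substituting $c_2=0$ into the formula of Lemma~\ref{le:ch-3.1} gives $E_\alpha(G)=\alpha^2\sum_{i=1}^n (d_i^+)^2$, as claimed. For part (ii), if $G$ is symmetric then every arc $(v_i,v_j)$ comes paired with $(v_j,v_i)$, so the $e$ arcs split into $e/2$ symmetric pairs, and by the observation above $c_2 = 2\cdot(e/2) = e$. (In particular $e$ is even here, though we do not even need to say so.) Plugging $c_2=e$ into Lemma~\ref{le:ch-3.1} yields $E_\alpha(G)=\alpha^2\sum_{i=1}^n (d_i^+)^2 + (1-\alpha)^2 e$, completing part (ii).

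I would write this up in two or three sentences per part, the only genuinely substantive point being the bookkeeping identity $c_2 = 2\cdot(\text{number of symmetric arc pairs})$, which follows from unwinding the definition of a closed walk of length $2$ together with the loop-free hypothesis. There is no real obstacle: everything is a direct specialization of the already-stated lemma. If one wanted to be slightly more self-contained, one could alternatively recall that $c_2 = \operatorname{tr}(A(G)^2) = \sum_{i,j} a_{ij}a_{ji}$ and evaluate this sum directly in each case — zero when $G$ is simple because $a_{ij}a_{ji}=0$ for all $i,j$, and equal to the number of ordered pairs $(i,j)$ with $a_{ij}=1$ (which is $e$) when $G$ is symmetric because then $a_{ij}=a_{ji}$. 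The trace formulation is perhaps the cleanest way to present it and avoids any ambiguity about counting ordered versus unordered pairs.
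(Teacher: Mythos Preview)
Your proposal is correct and matches the paper's approach exactly: the paper simply writes ``From Lemma~\ref{le:ch-3.1}, the conclusion is obvious,'' and your argument is precisely the unpacking of that sentence, computing $c_2=0$ in the simple case and $c_2=e$ in the symmetric case. Your trace formulation $c_2=\sum_{i,j}a_{ij}a_{ji}$ is a clean way to make the ``obvious'' step explicit.
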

\begin{proof}
From Lemma \ref{le:ch-3.1}, the conclusion is obvious.
\end{proof}

 Let $c_2$ be the number of all closed walks of length $2$ of  a digraph. From Theorem \ref{th:ch-3.2}, we have the following results.
\noindent\begin{example}\label{ex:ch-3.3} We give some $A_\alpha$ energies of special digraphs as follows: \\
(1) $E_\alpha(P_n)=\alpha^2(n-1);$\\
(2) $E_\alpha(C_n)=
\begin{cases}
\alpha^2n, & \textnormal{if}\ n>2,\\
2(\alpha^2-2\alpha+1), & \textnormal{if}\ n=2;
\end{cases}$\\
(3) $E_\alpha(\stackrel{\rightarrow}{K}_{1,n-1})=\alpha^2(n-1)^2;$\\
(4) $E_\alpha(\stackrel{\leftarrow}{K}_{1,n-1})=\alpha^2(n-1);$\\
(5) $E_\alpha(\stackrel{\leftrightarrow}{K}_{1,n-1})=\alpha^2 n(n-1)+2(1-\alpha)^2(n-1);$\\
(6) $E_\alpha(\infty[m_1,m_2,\ldots,m_t])=\alpha^2(t^2+n-1)+(1-\alpha)^2c_2;$\\
(7) $E_\alpha(B[p,q])=\alpha^2(p^2+q^2+n-2)+(1-\alpha)^2c_2.$
\end{example}

\noindent\begin{lemma}\label{le:ch-3.4}(\cite{Xi}) Let $T$ be a directed tree with $n$ vertices. Then
$$\alpha^2(n-1)\leq E_\alpha(T)\leq\alpha^2(n-1)^2.$$
Moreover, $E_\alpha(T)=\alpha^2(n-1)$, if and only if $T$ is an in-tree with $n$ vertices; $E_\alpha(T)=\alpha^2(n-1)^2$ if and only if $T$ is an out-star $\stackrel{\rightarrow}{K}_{1,n-1}$.
\end{lemma}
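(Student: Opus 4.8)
The plan is to reduce $E_\alpha(T)$ to a function of the outdegree sequence alone and then optimize that function by elementary inequalities. First I would observe that a directed tree is acyclic and in particular has no closed walk of length $2$, so $c_2=0$; Lemma \ref{le:ch-3.1} then gives $E_\alpha(T)=\alpha^2\sum_{i=1}^n (d_i^+)^2$. Since a tree on $n$ vertices has $n-1$ arcs, the outdegrees form a sequence of nonnegative integers with $\sum_{i=1}^n d_i^+ = n-1$. Thus the whole statement becomes a problem of bounding $\sum_{i=1}^n (d_i^+)^2$ over such sequences, together with the realizability question for the extremal cases.

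For the lower bound I would use $(d_i^+)^2\ge d_i^+$ for every nonnegative integer $d_i^+$, so $\sum_{i=1}^n (d_i^+)^2\ge \sum_{i=1}^n d_i^+ = n-1$, whence $E_\alpha(T)\ge \alpha^2(n-1)$. Equality forces $(d_i^+)^2=d_i^+$ for all $i$, i.e. $d_i^+\in\{0,1\}$; together with $\sum d_i^+ = n-1$ this says exactly one vertex has outdegree $0$ and all others have outdegree $1$, which is precisely the defining property of an in-tree. Conversely, every in-tree has this outdegree sequence, so $E_\alpha(T)=\alpha^2(n-1)$ if and only if $T$ is an in-tree.

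For the upper bound I would use the elementary fact that $\sum_{i=1}^n x_i^2\le\bigl(\sum_{i=1}^n x_i\bigr)^2$ for nonnegative reals, the difference being $\sum_{i\ne j}x_ix_j\ge 0$. Hence $\sum_{i=1}^n (d_i^+)^2\le (n-1)^2$ and $E_\alpha(T)\le\alpha^2(n-1)^2$. Equality holds exactly when all the cross terms vanish, i.e. at most one vertex has positive outdegree; since the outdegrees sum to $n-1$, that vertex must have outdegree $n-1$, and as there are no multiple arcs it must send an arc to each of the remaining $n-1$ vertices, so $T=\stackrel{\rightarrow}{K}_{1,n-1}$. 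Conversely the out-star attains this value. The degenerate case $n=1$ is trivial, both bounds being $0$.

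I do not expect a real obstacle: once Lemma \ref{le:ch-3.1} is invoked, the claim collapses to two one-line inequalities about nonnegative integers. The only point needing a little care is matching the two equality cases with the named digraphs — verifying that "all outdegrees at most $1$" is exactly the definition of an in-tree, and that the unique maximizing outdegree sequence is realized, and only realized, by $\stackrel{\rightarrow}{K}_{1,n-1}$ — but both are immediate from the definitions in the introduction.
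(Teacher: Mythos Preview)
Your argument is correct and complete: once Lemma~\ref{le:ch-3.1} reduces $E_\alpha(T)$ to $\alpha^2\sum(d_i^+)^2$ with $c_2=0$, the two inequalities $(d_i^+)^2\ge d_i^+$ and $\sum(d_i^+)^2\le(\sum d_i^+)^2$ settle both bounds, and your identification of the equality cases with the in-tree and the out-star is exactly right. Note, however, that the paper does not actually prove this lemma --- it is quoted from \cite{Xi} --- so there is no ``paper's own proof'' to compare against; your write-up is the natural elementary argument and almost certainly what \cite{Xi} does as well.
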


Next, we give our main results.

\noindent\begin{theorem}\label{th:ch-3.5} Let $G,G'\in\mathcal{G}_n^m$ be two non-strongly connected digraphs as defined in Definition \ref{de:ch-1.1}. Then $E_\alpha(G')\geq E_\alpha(G)$ with equality holds if and only if $G\cong G'$.
\end{theorem}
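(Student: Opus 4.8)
The plan is to compute both energies via Lemma~\ref{le:ch-3.1} and compare term by term. Write $E_\alpha(G)=\alpha^2\sum_{v}(d_G^+(v))^2+(1-\alpha)^2 c_2(G)$ and similarly for $G'$. Since $G'$ is obtained from $G$ by deleting all arcs inside each directed tree $T^{(i)}$ and replacing them by the arcs of the out-star $\stackrel{\rightarrow}{K}_{1,n_i-1}$ centred at $v_i$, while the arcs of $G^*$ and the total vertex count stay fixed, the two quantities to control are (a) the outdegree sum of squares and (b) the closed-walk count $c_2$.

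First I would handle the closed-walk term. A closed walk of length $2$ in a digraph corresponds to a symmetric pair of arcs $(x,y),(y,x)$. In $G'$ every arc not in $G^*$ is an out-star arc $(v_i,u_j^{(i)})$ with $u_j^{(i)}$ of outdegree $0$, hence not part of any $2$-cycle; so $c_2(G')$ equals the number of $2$-cycles lying entirely in $G^*$, which is a fixed number, say $c_2(G^*)$. On the other hand $c_2(G)\ge c_2(G^*)$ since $G^*$ is a subdigraph of $G$ and every $2$-cycle of $G$ uses only arcs incident to the strong component or inside a tree — but a $2$-cycle cannot be wholly inside a directed tree (trees have no cycles), and any $2$-cycle meeting a tree vertex would force that vertex onto a strong component, contradiction; so in fact $c_2(G)=c_2(G^*)=c_2(G')$. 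Thus the $(1-\alpha)^2c_2$ terms are equal, and the comparison reduces to the outdegree sum of squares.

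Next I would compare $\sum_v (d_{G'}^+(v))^2$ with $\sum_v (d_G^+(v))^2$. Fix a tree $T^{(i)}$ with vertex set $\{u_1^{(i)},\dots,u_{n_i}^{(i)}\}$, $u_1^{(i)}=v_i$; inside $T^{(i)}$ there are exactly $n_i-1$ arcs, distributed among the vertices as outdegrees $t_1,\dots,t_{n_i}$ with $\sum_k t_k=n_i-1$, and in $G'$ these become $n_i-1,0,\dots,0$. Writing $d_G^+(u_k^{(i)})=d_k^*+t_k$ where $d_k^*$ is the contribution from arcs of $G^*$ (so $d_1^*=d_{G^*}^+(v_i)$ and $d_k^*=0$ for $k\ge2$), one gets $d_{G'}^+(v_i)^2-\sum_k d_G^+(u_k^{(i)})^2 = (d_{G^*}^+(v_i)+n_i-1)^2-(d_{G^*}^+(v_i)+t_1)^2-\sum_{k\ge2}t_k^2$. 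Expanding and using $\sum_{k\ge 2}t_k=(n_i-1)-t_1$ together with $\sum_{k\ge2}t_k^2\le\big(\sum_{k\ge2}t_k\big)^2$, I expect this difference to be $\ge 0$, with equality forcing each tree to already be the out-star centred at $v_i$; summing over $i$ gives $E_\alpha(G')\ge E_\alpha(G)$ with equality iff every $T^{(i)}=\stackrel{\rightarrow}{K}_{1,n_i-1}$, i.e. $G\cong G'$. This also reproves the content of Lemma~\ref{le:ch-3.4}. The main obstacle, and the only place needing care, is the algebra of the equality analysis: one must check that the cross terms (those coupling $d_{G^*}^+(v_i)$ with the $t_k$) also push in the right direction and that equality in $\sum t_k^2\le(\sum t_k)^2$ plus equality in the cross-term bound jointly pin down the out-star exactly, so that no degenerate configuration (e.g. a single long directed path hung on $v_i$) sneaks in as an alternative extremal digraph.
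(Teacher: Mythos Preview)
Your approach is correct and matches the paper's: both invoke Lemma~\ref{le:ch-3.1}, note that the $(1-\alpha)^2c_2$ contribution is unchanged (the paper leaves this implicit while you spell it out), and then compare $\sum (d^+)^2$ tree by tree, the paper citing Lemma~\ref{le:ch-3.4} where you reprove it inline via $\sum t_k^2\le\big(\sum t_k\big)^2$. Your worry about the equality analysis dissolves once you use $d_{G^*}^+(v_i)\ge 1$ (strong connectivity of $G^*$ on $m\ge 2$ vertices): the cross term $2(s-t_1)(d_{G^*}^+(v_i)+t_1)=0$ then forces $t_1=n_i-1$, hence $t_k=0$ for $k\ge 2$, pinning $T^{(i)}$ to the out-star centred at $v_i$.
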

\begin{proof}
By the definition of $G$, we know $G\in\mathcal{G}_n^m$ is a non-strongly connected digraph with order $n$, which contains a unique strong component with order $m$ and some directed trees which are hung on each vertex of the strong component. From Lemma \ref{le:ch-3.4}, we know the maximal $A_\alpha$ energy of $T^{(i)}$ is
$$\left(E_\alpha(T^{(i)})\right)_{\mathrm{max}}=\alpha^2(n_i-1)^2,$$
where $i=1,2,\ldots,m$. Then we have
\begin{align*}
E_\alpha(G)&=\alpha^2\sum_{i=1}^m\sum_{j=1}^{n_i}\left(d_G^+(u_j^{(i)})\right)^2+(1-\alpha)^2c_2\\
&=\alpha^2\sum_{i=1}^m\left(d_{G^*}^+(u_1^{(i)})+d_{T^{(i)}}^+(u_1^{(i)})\right)^2+ \alpha^2\sum_{i=1}^m\sum_{j=2}^{n_i}\left(d_G^+(u_j^{(i)})\right)^2+(1-\alpha)^2c_2\\
&=\alpha^2\sum_{i=1}^m\left(\left(d_{G^*}^+(v_i)\right)^2+\left(d_{T^{(i)}}^+(u_1^{(i)})\right)^2+
2d_{G^*}^+(v_i)d_{T^{(i)}}^+(u_1^{(i)})\right)\\
&\ \ \ \ +\alpha^2\sum_{i=1}^m\sum_{j=2}^{n_i}\left(d_{T^{(i)}}^+(u_j^{(i)})\right)^2+(1-\alpha)^2c_2\\
&=\alpha^2\sum_{i=1}^m\left(d_{G^*}^+(v_i)\right)^2+\alpha^2\sum_{i=1}^m\sum_{j=1}^{n_i}\left(d_{T^{(i)}}^+(u_j^{(i)})\right)^2
+2\alpha^2\sum_{i=1}^md_{G^*}^+(v_i)d_{T^{(i)}}^+(v_i)+(1-\alpha)^2c_2\\
&\leq \alpha^2\sum_{i=1}^m\left(d_{G^*}^+(v_i)\right)^2+\alpha^2\sum_{i=1}^m(n_i-1)^2
+2\alpha^2\sum_{i=1}^md_{G^*}^+(v_i)(n_i-1)+(1-\alpha)^2c_2\\
&=\alpha^2\sum_{i=1}^m\left(d_{G^*}^+(v_i)+(n_i-1)\right)^2+(1-\alpha)^2c_2\\
&=E_\alpha(G').
\end{align*}

The equality holds if and only if
$$\sum_{j=1}^{n_i}\left(d_{T^{(i)}}^+(u_j^{(i)})\right)^2+2d_{G^*}^+(v_i)d_{T^{(i)}}^+(v_i)= (n_i-1)^2+2d_{G^*}^+(v_i)(n_i-1),$$
for all $i=1,2,\ldots,m$. Anyway, the strong component $G^*$ does not change, so $d_{G^*}^+(v_i)$ does not change. That is, $d_G^+(u_1^{(i)})=d_{T^{(i)}}^+(v_i)=n_i-1$, and $d_G^+(u_j^{(i)})=0$, where $i=1,2,\ldots,m$ and $j=2,3,\ldots,n_i$. Then each directed tree $T^{(i)}$ is an out-star $\stackrel{\rightarrow}{K}_{1,n_i-1}$.

Hence, we have $E_\alpha(G')\geq E_\alpha(G)$ with equality holds if and only if $G\cong G'$.
\end{proof}

\noindent\begin{theorem}\label{th:ch-3.6} Let $G',G''\in\mathcal{G}_n^m$ be two non-strongly connected digraphs as defined in Definition \ref{de:ch-1.1}. Then $E_\alpha(G'')\geq E_\alpha(G')$ with equality holds if and only if $G'\cong G''$.
\end{theorem}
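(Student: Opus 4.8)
The plan is to use Lemma~\ref{le:ch-3.1} to reduce the claim to a comparison of the sums of squares of outdegrees. First I would note that every arc of $G'$ (resp.\ $G''$) that does not lie in $G^*$ is an arc of an out-star directed away from its centre, whose head therefore has outdegree $0$; such an arc cannot lie on a closed walk of length $2$, and since the strong component $G^*$ is unchanged we get $c_2(G')=c_2(G^*)=c_2(G'')$. Writing $a_i=d_{G^*}^+(v_i)$ and $b_i=n_i-1\ge 0$, so that $a_1\ge a_2\ge\cdots\ge a_m$ and $\sum_{i=1}^m b_i=n-m$, we have $d_{G'}^+(v_i)=a_i+b_i$ with all leaves of outdegree $0$, and $d_{G''}^+(v_1)=a_1+\sum_{i=1}^m b_i$, $d_{G''}^+(v_i)=a_i$ for $i\ge 2$, again with all leaves of outdegree $0$. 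Hence Lemma~\ref{le:ch-3.1} gives
\[
E_\alpha(G'')-E_\alpha(G')=\alpha^2\Big[\big(a_1+\textstyle\sum_{i=1}^m b_i\big)^2+\sum_{i=2}^m a_i^2-\sum_{i=1}^m(a_i+b_i)^2\Big].
\]

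Next I would expand the bracket. After cancelling $\sum_{i=1}^m a_i^2$ and the term $a_1^2$, a direct computation yields
\[
\big(a_1+\textstyle\sum_{i=1}^m b_i\big)^2+\sum_{i=2}^m a_i^2-\sum_{i=1}^m(a_i+b_i)^2 = 2\sum_{i=1}^m(a_1-a_i)b_i+\sum_{i\ne j} b_i b_j ,
\]
and both sums on the right are nonnegative because $a_1\ge a_i$ and $b_i\ge 0$ for every $i$. Therefore $E_\alpha(G'')\ge E_\alpha(G')$. (When $\alpha=0$ both sides equal $c_2(G^*)$; for the equality discussion assume $\alpha\in(0,1)$.) Equality forces both nonnegative sums to vanish: $\sum_{i\ne j}b_ib_j=0$ together with $b_i\ge 0$ shows that at most one $b_i$ is positive, and $\sum_i(a_1-a_i)b_i=0$ shows that this positive $b_k$ (if it exists) satisfies $a_k=a_1$, i.e.\ $v_k$ has maximum outdegree in $G^*$. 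If no $b_i$ is positive then every $n_i=1$, so $n=m$ and $G'=G^*=G''$; otherwise $n_k=n-m+1$ and $n_i=1$ for $i\ne k$, so $G'$ is $G^*$ with a single out-star $\stackrel{\rightarrow}{K}_{1,n-m}$ hung on a maximum-outdegree vertex of $G^*$, which is precisely $G''$ (after labelling that vertex $v_1$). The converse is immediate, so equality holds if and only if $G'\cong G''$.

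The two displayed computations are routine algebra; the only place needing care is the equality analysis, and there the delicate point is the tie $a_k=a_1$ with $k\ne 1$: one must observe that hanging the out-star on any vertex of maximum outdegree in $G^*$ gives a digraph that, after an appropriate relabelling of the strong component, is exactly $G''$, so no new extremal digraph arises.
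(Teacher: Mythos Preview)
Your proof is correct and follows essentially the same route as the paper: both apply Lemma~\ref{le:ch-3.1}, observe that $c_2$ is unchanged, and then compare $\sum_i(a_i+b_i)^2$ with $(a_1+\sum_i b_i)^2+\sum_{i\ge 2}a_i^2$ via the two elementary inequalities $\sum_i b_i^2\le(\sum_i b_i)^2$ and $\sum_i a_ib_i\le a_1\sum_i b_i$ (you package these as the single identity $2\sum_i(a_1-a_i)b_i+\sum_{i\ne j}b_ib_j\ge 0$, which is algebraically equivalent). Your equality analysis is in fact slightly more careful than the paper's: you explicitly treat the tie case $a_k=a_1$ with $k\neq 1$ (showing it still yields $G'\cong G''$ after relabelling) and flag that the ``only if'' direction requires $\alpha\neq 0$, both of which the paper glosses over.
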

\begin{proof}
By the definition of $G''$, we know $G''\in\mathcal{G}_n^m$ is a non-strongly connected digraph which only has an out-star $\stackrel{\rightarrow}{K}_{1,n-m}$ and the centre of $\stackrel{\rightarrow}{K}_{1,n-m}$ is $v_1$ of $G^*$, $v_1$ is the maximal outdegree vertex of $G^*$ and other directed tree $T^{(i)}$ is just a vertex $v_i$ of $G^*$ for $i=2,3,\ldots,m$. Then we have
$$E_\alpha(G'')=\alpha^2\left(d_{G^*}^+(v_1)+n-m\right)^2+\alpha^2\sum_{i=2}^m(d_{G^*}^+(v_i))^2+(1-\alpha)^2c_2.$$
Since
\begin{align*}
E_\alpha(G')&=\alpha^2\sum_{i=1}^m\left(d_{G^*}^+(v_i)+(n_i-1)\right)^2+(1-\alpha)^2c_2\\
&=\alpha^2\left(\sum_{i=1}^m(d_{G^*}^+(v_i))^2+\sum_{i=1}^m(n_i-1)^2+2\sum_{i=1}^md_{G^*}^+(v_i)(n_i-1)\right)+(1-\alpha)^2c_2\\
&\leq\alpha^2\left(\sum_{i=1}^m(d_{G^*}^+(v_i))^2+\left(\sum_{i=1}^m(n_i-1)\right)^2+2\sum_{i=1}^md_{G^*}^+(v_1)(n_i-1)\right)+(1-\alpha)^2c_2\\
&=\alpha^2\left(\sum_{i=1}^m(d_{G^*}^+(v_i))^2+(n-m)^2+2d_{G^*}^+(v_1)(n-m)\right)+(1-\alpha)^2c_2\\
&=\alpha^2\left(d_{G^*}^+(v_1)+n-m\right)^2+\alpha^2\sum_{i=2}^m(d_{G^*}^+(v_i))^2+(1-\alpha)^2c_2\\
&=E_\alpha(G'').
\end{align*}

The equality holds if and only if
$$\sum_{i=1}^m(n_i-1)^2+2\sum_{i=1}^md_{G^*}^+(v_i)(n_i-1)=\left(\sum_{i=1}^m(n_i-1)\right)^2+2\sum_{i=1}^md_{G^*}^+(v_1)(n_i-1).$$
Anyway, the strong component $G^*$ does not change, so $d_{G^*}^+(v_i)$ does not change. That is, $n_i-1=0$ for all $i=2,3,\ldots,m$ and $n_1=n-m+1$. Then the directed tree $T^{(1)}$ is an out-star $\stackrel{\rightarrow}{K}_{1,n-m}$, and each other directed tree is a vertex $v_i$, where $i=2,3,\ldots,m$. Hence, we have $E_\alpha(G'')\geq E_\alpha(G')$ with equality holds if and only if $G'\cong G''$.
\end{proof}

\noindent\begin{theorem}\label{th:ch-3.7} Let $G,G'''\in\mathcal{G}_n^m$ be two non-strongly connected digraphs as defined in Definition \ref{de:ch-1.1}. Then $E_\alpha(G)\geq E_\alpha(G''')$ with equality holds if and only if $G\cong G'''$.
\end{theorem}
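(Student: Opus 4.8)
The plan is to mirror the structure of the proofs of Theorems \ref{th:ch-3.5} and \ref{th:ch-3.6}, but now push each hanging tree in the \emph{opposite} direction: instead of collapsing $T^{(i)}$ to an out-star, we collapse it to an in-tree rooted at $v_i$, which by Lemma \ref{le:ch-3.4} minimizes its $A_\alpha$ energy. The key observation is that the only change between $G$ and $G'''$ is the internal structure of the hanging trees; the strong component $G^*$ is untouched, so $d_{G^*}^+(v_i)$ is fixed for every $i$, the number $c_2$ of closed walks of length $2$ is unchanged (all the closed walks of length $2$ live on symmetric arcs, and directed trees contain no symmetric arcs, so $c_2(G)=c_2(G''')=c_2(G^*)$), and hence by Lemma \ref{le:ch-3.1} the difference $E_\alpha(G)-E_\alpha(G''')$ is $\alpha^2$ times a difference of sums of squares of outdegrees.

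The main step is the same outdegree bookkeeping as in Theorem \ref{th:ch-3.5}. For each $i$, split the outdegree of $u_j^{(i)}$ in $G$ into its contribution from $G^*$ (nonzero only for $j=1$, equal to $d_{G^*}^+(v_i)$) and its contribution from $T^{(i)}$. Expanding the square at the root vertex $v_i=u_1^{(i)}$ gives
$$
\sum_{j=1}^{n_i}\bigl(d_G^+(u_j^{(i)})\bigr)^2=\bigl(d_{G^*}^+(v_i)\bigr)^2+2\,d_{G^*}^+(v_i)\,d_{T^{(i)}}^+(v_i)+\sum_{j=1}^{n_i}\bigl(d_{T^{(i)}}^+(u_j^{(i)})\bigr)^2,
$$
and the last two terms are exactly what one would get by computing $E_\alpha$ of the tree $T^{(i)}$ itself plus the cross term $2\,d_{G^*}^+(v_i)\,d_{T^{(i)}}^+(v_i)$. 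Now I invoke Lemma \ref{le:ch-3.4} to get $\sum_j(d_{T^{(i)}}^+(u_j^{(i)}))^2=E_\alpha(T^{(i)})/\alpha^2\ge (n_i-1)$, and I also need $d_{T^{(i)}}^+(v_i)\ge 0=d_{\mathrm{in\text{-}tree}}^+(\text{root})$, which holds trivially since outdegrees are nonnegative and the root of an in-tree has outdegree $0$. Summing over $i$ and multiplying by $\alpha^2$, then adding back $(1-\alpha)^2c_2$, yields $E_\alpha(G)\ge \alpha^2\sum_{i=1}^m (d_{G^*}^+(v_i))^2+\alpha^2\sum_{i=1}^m(n_i-1)+(1-\alpha)^2 c_2=E_\alpha(G''')$, where the last equality uses that in $G'''$ every vertex not equal to some $v_i$ has outdegree at most... actually one must be a bit careful: in an in-tree the internal vertices have outdegree exactly $1$, so $\sum_{j=1}^{n_i}(d_{T^{(i)}}^+(u_j^{(i)}))^2=n_i-1$ (there are $n_i-1$ non-root vertices each of outdegree $1$, root of outdegree $0$) and $d_{T^{(i)}}^+(v_i)=0$; this is precisely the equality case of Lemma \ref{le:ch-3.4}, so the computation of $E_\alpha(G''')$ is exact.

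For the equality characterization, equality forces, for every $i$, both $\sum_{j=1}^{n_i}(d_{T^{(i)}}^+(u_j^{(i)}))^2=n_i-1$ and $d_{G^*}^+(v_i)\,d_{T^{(i)}}^+(v_i)=0$. The first condition, by the equality clause of Lemma \ref{le:ch-3.4}, says $T^{(i)}$ is an in-tree; once $T^{(i)}$ is an in-tree its root $v_i$ automatically has outdegree $0$, so the second condition is then automatic. Hence equality holds iff every hanging tree is an in-tree rooted at the corresponding strong-component vertex, i.e. iff $G\cong G'''$.

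The only place requiring genuine care — the part I would flag as the main obstacle — is justifying $c_2(G)=c_2(G''')$ cleanly, i.e. arguing that modifying the hanging trees does not create or destroy any length-$2$ closed walk. A length-$2$ closed walk uses a pair of arcs $(u,w),(w,u)$, i.e. a symmetric arc. Since each $T^{(i)}$ is a directed tree (acyclic, in particular no $2$-cycles) and $T^{(i)}$ is attached to $G^*$ only at the single vertex $v_i$, no symmetric arc of $G$ can involve a non-root tree vertex; every symmetric arc of $G$ lies entirely inside $G^*$, and the same is true in $G'''$. Therefore $c_2(G)=c_2(G^*)=c_2(G''')$, and Lemma \ref{le:ch-3.1} applies to both digraphs with the same $c_2$. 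With that in hand the rest is the routine sum-of-squares manipulation above.
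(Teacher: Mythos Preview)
Your proposal is correct and follows exactly the route the paper takes: the paper's own proof merely cites Lemma~\ref{le:ch-3.4} for $(E_\alpha(T^{(i)}))_{\min}=\alpha^2(n_i-1)$, says ``Similar to the proof of Theorem~\ref{th:ch-3.5}, we can get the result easily,'' and records the closed form $E_\alpha(G''')=\alpha^2\sum_{i=1}^m(d_{G^*}^+(v_i))^2+\alpha^2(n-m)+(1-\alpha)^2c_2$, so your write-up is in fact strictly more detailed than the paper's (including the explicit justification that $c_2(G)=c_2(G''')$, which the paper uses silently).

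One small slip in your equality discussion: Lemma~\ref{le:ch-3.4} only tells you that $T^{(i)}$ is \emph{some} in-tree, not that its root is $v_i$; it is the second condition $d_{G^*}^+(v_i)\,d_{T^{(i)}}^+(v_i)=0$, combined with $d_{G^*}^+(v_i)\ge 1$ (since $v_i$ lies in the nontrivial strong component $G^*$), that forces $d_{T^{(i)}}^+(v_i)=0$ and hence identifies $v_i$ as the unique outdegree-zero vertex, i.e.\ the root. So the second condition is not ``automatic'' from the first --- rather, it is exactly what pins the root down to $v_i$.
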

\begin{proof}
From Lemma \ref{le:ch-3.4}, we know the minimal $A_\alpha$ energy of $T^{(i)}$ is
$$\left(E_\alpha(T^{(i)})\right)_{\mathrm{min}}=\alpha^2(n_i-1),$$
where $i=1,2,\ldots,m$. Similar to the proof of Theorem \ref{th:ch-3.5}, we can get the result easily. And
$$E_\alpha(G''')=\alpha^2\sum_{i=1}^n(d_{G^*}^+(v_i))^2+\alpha^2(n-m)+(1-\alpha)^2c_2.$$
\end{proof}

From Theorems \ref{th:ch-3.5} and \ref{th:ch-3.7}, we have the following results.

\noindent\begin{theorem}\label{th:ch-3.8} Among all digraphs in $\mathcal{G}_n^m$, $G''$ is the unique digraph which has the maximal $A_\alpha$ energy and $G'''$ is the unique digraph which has the minimal $A_\alpha$ energy.
\end{theorem}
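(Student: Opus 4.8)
The plan is to assemble Theorem~\ref{th:ch-3.8} directly from the two chains of inequalities already established, treating the maximum and the minimum separately. For the maximal $A_\alpha$ energy, I would first invoke Theorem~\ref{th:ch-3.5} to get $E_\alpha(G)\le E_\alpha(G')$ for every $G\in\mathcal{G}_n^m$, where $G'$ is the associated digraph from Definition~\ref{de:ch-1.1}(i), with equality iff $G\cong G'$; then apply Theorem~\ref{th:ch-3.6} to get $E_\alpha(G')\le E_\alpha(G'')$, with equality iff $G'\cong G''$. Composing these yields $E_\alpha(G)\le E_\alpha(G'')$ for all $G\in\mathcal{G}_n^m$, so $G''$ attains the maximum. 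For the minimum, I would use Theorem~\ref{th:ch-3.7}, which gives $E_\alpha(G)\ge E_\alpha(G''')$ with equality iff $G\cong G'''$, where $G'''$ is obtained by replacing every hanging tree with an in-tree rooted at the corresponding strong-component vertex; this immediately shows $G'''$ attains the minimum.

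The substantive point — the only place any real argument is needed beyond citing the previous theorems — is \emph{uniqueness}. For the maximum, I would argue that if $H\in\mathcal{G}_n^m$ satisfies $E_\alpha(H)=E_\alpha(G'')$, then following the chain $E_\alpha(H)\le E_\alpha(H')\le E_\alpha(H'')$ forces both inequalities to be equalities, whence $H\cong H'$ and $H'\cong H''$ by the equality characterizations in Theorems~\ref{th:ch-3.5} and~\ref{th:ch-3.6}. But $H''$ is, by construction in Definition~\ref{de:ch-1.1}(ii), the same digraph as $G''$ (the strong component $G^*$ is fixed across the whole class, so $H''=G''$ regardless of $H$); hence $H\cong G''$. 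The same template handles the minimum: if $E_\alpha(H)=E_\alpha(G''')$ then equality in Theorem~\ref{th:ch-3.7} gives $H\cong H'''=G'''$. One subtlety worth a sentence is the degenerate case $\alpha=0$, where $E_\alpha\equiv(1-\alpha)^2c_2=c_2$ depends only on the (fixed) strong component, so \emph{every} digraph in $\mathcal{G}_n^m$ has the same $A_0$ energy; I would note that the theorem's uniqueness claim is understood for $\alpha\in(0,1)$, matching the implicit convention in Theorems~\ref{th:ch-3.5}–\ref{th:ch-3.7} where the $\alpha^2$-weighted outdegree terms are what drive the strict comparisons.

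I do not anticipate a genuine obstacle here: the theorem is a corollary-style synthesis, and all the analytic work (the convexity/rearrangement estimates on $\sum (d_i^+)^2$ and the tree-energy bounds of Lemma~\ref{le:ch-3.4}) has already been done in Theorems~\ref{th:ch-3.5}, \ref{th:ch-3.6}, and~\ref{th:ch-3.7}. The one thing I would be careful to state explicitly is that the operators $G\mapsto G'$, $G'\mapsto G''$, and $G\mapsto G'''$ all preserve membership in $\mathcal{G}_n^m$ and leave $G^*$ (hence $c_2$ and the sequence $d_{G^*}^+(v_1)\ge\cdots\ge d_{G^*}^+(v_m)$) unchanged — this is what makes $H''$ and $H'''$ independent of the starting digraph $H$ and therefore equal to $G''$ and $G'''$ respectively, which is exactly the fact the uniqueness argument rests on.
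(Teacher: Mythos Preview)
Your proposal is correct and matches the paper's approach: the paper presents Theorem~\ref{th:ch-3.8} as an immediate consequence of the preceding results (its entire justification is the single line ``From Theorems~\ref{th:ch-3.5} and~\ref{th:ch-3.7}'') and offers no further argument. Your write-up is in fact more thorough---you correctly bring in Theorem~\ref{th:ch-3.6} for the maximum chain and you flag the $\alpha=0$ degeneracy for uniqueness, both points the paper passes over.
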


\noindent\begin{corollary}\label{co:ch-3.9} Let $G\in\mathcal{G}_n^m$ be a non-strongly connected digraph with $n$ vertices. Then
\begin{align*}
&\alpha^2\sum_{i=1}^m(d_{G^*}^+(v_i))^2+\alpha^2(n-m)+(1-\alpha)^2c_2\leq E_\alpha(G)\\
&\leq\alpha^2\left(d_{G^*}^+(v_1)+n-m\right)^2+\alpha^2\sum_{i=2}^m(d_{G^*}^+(v_i))^2+(1-\alpha)^2c_2.
\end{align*}
Moreover, the first equality holds if and only if each directed tree is in-tree which the root of the in-tree is $v_i$ of $G^*$, where $i=1,2,\ldots,m$; the second equality holds if and only if $G\in\mathcal{G}_n^m$ only has an out-star $\stackrel{\rightarrow}{K}_{1,n-m}$ and the centre of $\stackrel{\rightarrow}{K}_{1,n-m}$ is $v_1$ of $G^*$, $v_1$ is the maximal outdegree vertex of $G^*$ and each other directed tree $T^{(i)}$ is just a vertex $v_i$ of $G^*$ for $i=2,3,\ldots,m$.
\end{corollary}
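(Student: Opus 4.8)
The plan is to read the Corollary directly off the chain of comparisons established in Theorems \ref{th:ch-3.5}--\ref{th:ch-3.8}, after first isolating the one structural fact that makes everything line up: every rewiring operation in Definition \ref{de:ch-1.1} only touches arcs lying inside the hanging directed trees $T^{(i)}$, and a directed tree has no cycles, hence no directed $2$-cycle. Therefore the number $c_2$ of closed walks of length $2$ depends only on the (unchanged) strong component $G^*$ and is the same for $G$, $G'$, $G''$ and $G'''$. With $c_2$ fixed, Lemma \ref{le:ch-3.1} reduces each of these energies to $\alpha^2\sum_i (d_i^+)^2$ plus the common constant $(1-\alpha)^2 c_2$, so the whole Corollary becomes a statement about the outdegree sequences of the extremal digraphs.

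For the upper bound, given an arbitrary $G\in\mathcal{G}_n^m$ I would pass to the associated digraph $G'$ and then to $G''$ of Definition \ref{de:ch-1.1}. Theorem \ref{th:ch-3.5} gives $E_\alpha(G)\le E_\alpha(G')$ and Theorem \ref{th:ch-3.6} gives $E_\alpha(G')\le E_\alpha(G'')$, so $E_\alpha(G)\le E_\alpha(G'')$. It then remains to evaluate $E_\alpha(G'')$: in $G''$ the vertex $v_1$ has outdegree $d_{G^*}^+(v_1)+n-m$, the $n-m$ leaves of the out-star $\stackrel{\rightarrow}{K}_{1,n-m}$ have outdegree $0$, and $v_i$ keeps outdegree $d_{G^*}^+(v_i)$ for $i\ge 2$; so Lemma \ref{le:ch-3.1} yields exactly the right-hand side of the claimed inequality. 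Equality propagates backwards through the two equality statements -- from Theorem \ref{th:ch-3.6} one needs $n_i=1$ for $i\ge 2$ and $n_1=n-m+1$, and from Theorem \ref{th:ch-3.5} the surviving tree $T^{(1)}$ must then be $\stackrel{\rightarrow}{K}_{1,n-m}$ -- which is precisely the description of $G''$; alternatively one simply quotes the uniqueness part of Theorem \ref{th:ch-3.8}.

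For the lower bound I would instead pass to the in-tree digraph $G'''$ of Definition \ref{de:ch-1.1}(iii). Theorem \ref{th:ch-3.7} gives $E_\alpha(G)\ge E_\alpha(G''')$ with equality if and only if $G\cong G'''$. To finish I would evaluate $E_\alpha(G''')$: each in-tree rooted at $v_i$ leaves $v_i$ with tree-outdegree $0$, so $d_{G'''}^+(v_i)=d_{G^*}^+(v_i)$, while each of the remaining $n-m$ tree vertices has outdegree exactly $1$; hence $\sum_i (d_i^+)^2=\sum_{i=1}^m (d_{G^*}^+(v_i))^2+(n-m)$, and Lemma \ref{le:ch-3.1} gives the left-hand side of the claimed inequality. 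Since by Lemma \ref{le:ch-3.4} all in-trees on $n_i$ vertices have the same $A_\alpha$ energy, the equality condition is exactly ``every hanging tree is an in-tree whose root is the corresponding vertex of $G^*$,'' matching the statement.

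There is no serious obstacle here; the substantive work already lives in Theorems \ref{th:ch-3.5}--\ref{th:ch-3.8}. The only points I would take care to spell out are (a) the invariance of $c_2$ under all the tree-rewirings, which is what licenses carrying the term $(1-\alpha)^2 c_2$ unchanged through every estimate, and (b) that the two chained equality conditions for the upper bound intersect in precisely $G''$ and nothing larger -- which one settles either by combining the conditions directly or by invoking the uniqueness assertion of Theorem \ref{th:ch-3.8}.
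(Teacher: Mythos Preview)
Your proposal is correct and follows essentially the same approach as the paper: the Corollary is read off directly from Theorems \ref{th:ch-3.5}--\ref{th:ch-3.8}, using the explicit values of $E_\alpha(G'')$ and $E_\alpha(G''')$ already computed in the proofs of Theorems \ref{th:ch-3.6} and \ref{th:ch-3.7}. Your explicit remark that $c_2$ is invariant under the tree-rewirings is a helpful clarification that the paper leaves implicit.
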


From Corollary \ref{co:ch-3.9}, we can get some bounds of $A_\alpha$ energies of special non-strongly connected digraphs.

\noindent\begin{corollary}\label{ex:ch-3.10} The bounds of $A_\alpha$ energies of special non-strongly connected digraphs $\widehat{U}_n^{m}$, $\widehat{\infty}[m_1,m_2,\ldots,m_t]$ and $\widehat{B}[p,q]$.

(i) Let $\widehat{U}_n^{m}\in\mathcal{G}_n^m$ be a unicyclic digraph with order $n$ which contains a unique directed cycle $C_m$ and some directed trees which are hung on each vertex of $C_m$, where $m\geq2$. Then
$$2\alpha^2+\alpha^2(n-2)+2(1-\alpha)^2\leq E_\alpha(\widehat{U}_n^{2})\leq\alpha^2(n-1)^2+\alpha^2+2(1-\alpha)^2,$$
and
$$\alpha^2m+\alpha^2(n-m)\leq E_\alpha(\widehat{U}_n^{m})\leq\alpha^2(n-m+1)^2+\alpha^2(m-1)\ (m\geq3).$$
Moreover, the first equality holds if and only if each directed tree is in-tree which the root of the in-tree is connected with $C_m$; the second equality holds if and only if $\widehat{U}_n^{m}\in\mathcal{G}_n^m$ only has an out-star $\stackrel{\rightarrow}{K}_{1,n-m}$ and the centre of $\stackrel{\rightarrow}{K}_{1,n-m}$ is an any vertex of $C_m$.

(ii) Let $\widehat{\infty}[m_1,m_2,\ldots,m_t]\in\mathcal{G}_n^m$ be a generalized $\widehat{\infty}$-digraph with order $n$ which contains $\infty[m_1,m_2,\ldots,m_t]$ and some directed trees which are hung on each vertex of $\infty[m_1,m_2,\ldots,m_t]$, where $2=m_1\cdots=m_s<m_{s+1}\leq\cdots\leq m_t$, $m=\sum_{i=1}^tm_i-t+1$ and the common vertex of  $t$ directed cycles $C_{m_i}$ is $v$. Then
\begin{align*}
&\alpha^2(m-1+t^2)+\alpha^2(n-m)+2s(1-\alpha)^2\leq E_\alpha(\widehat{\infty}[m_1,m_2,\ldots,m_t])\\
&\leq\alpha^2\left(n-m+t\right)^2+\alpha^2(m-1)+2s(1-\alpha)^2.
\end{align*}
Moreover, the first equality holds if and only if each directed tree is in-tree which the root of the in-tree is connected with $\infty[m_1,m_2,\ldots,m_t]$; the second equality holds if and only if $\widehat{\infty}[m_1,m_2,\ldots,m_t]\in\mathcal{G}_n^m$ only has an out-star $\stackrel{\rightarrow}{K}_{1,n-m}$ and the centre of $\stackrel{\rightarrow}{K}_{1,n-m}$ is $v$.

(iii) Let $\widehat{B}[p,q]\in\mathcal{G}_n^m$ be a digraph with order $n$ vertices which contains $B[p,q]$ and some directed trees which are hung on each vertex of $B[p,q]$, where $\mathcal{V}(B[p,q])=m$ and $p\geq q$. If both $(x,y)$ and $(y,x)$ are arcs in $\widehat{B}[p,q]$, then
\begin{align*}
&\alpha^2(m-2+p^2+q^2)+\alpha^2(n-m)+2(1-\alpha)^2\leq E_\alpha(\widehat{B}[p,q])\\
&\leq\alpha^2\left(n-m+p\right)^2+\alpha^2(m-2+q^2)+2(1-\alpha)^2.
\end{align*}
Otherwise,
$$\alpha^2(m-2+p^2+q^2)+\alpha^2(n-m)\leq E_\alpha(\widehat{B}[p,q])\leq\alpha^2\left(n-m+p\right)^2+\alpha^2(m-2+q^2).$$
Moreover, the first equality holds if and only if each directed tree is in-tree which the root of the in-tree is connected with $B[p,q]$; the second equality holds if and only if $\widehat{B}[p,q]\in\mathcal{G}_n^m$ only has an out-star $\stackrel{\rightarrow}{K}_{1,n-m}$ and the centre of $\stackrel{\rightarrow}{K}_{1,n-m}$ is $x$.
\end{corollary}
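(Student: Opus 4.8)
The final statement to prove is Corollary \ref{ex:ch-3.10}, which gives bounds on the $A_\alpha$ energy of three families of special non-strongly connected digraphs: $\widehat{U}_n^m$, $\widehat{\infty}[m_1,\dots,m_t]$, and $\widehat{B}[p,q]$.

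The plan is to apply Corollary \ref{co:ch-3.9} directly to each of the three families. Corollary \ref{co:ch-3.9} says that for any $G\in\mathcal{G}_n^m$,
\[
\alpha^2\sum_{i=1}^m(d_{G^*}^+(v_i))^2+\alpha^2(n-m)+(1-\alpha)^2c_2\le E_\alpha(G)\le\alpha^2\bigl(d_{G^*}^+(v_1)+n-m\bigr)^2+\alpha^2\sum_{i=2}^m(d_{G^*}^+(v_i))^2+(1-\alpha)^2c_2,
\]
with the stated equality conditions. So for each family, the only work is: (a) identify the unique strong component $G^*$ and compute the multiset of its outdegrees $d_{G^*}^+(v_1)\ge\cdots\ge d_{G^*}^+(v_m)$; (b) compute $c_2$, the number of closed walks of length $2$ in $G$ (equivalently, twice the number of symmetric arc-pairs, since a closed walk of length $2$ is $v\to w\to v$ with both $(v,w)$ and $(w,v)$ arcs); and (c) substitute into the two bounds and simplify. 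The equality-condition statements in the corollary then specialize verbatim.

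Carrying this out family by family. For $\widehat{U}_n^2$, the strong component is $C_2$, which has two vertices each of outdegree $1$, so $\sum(d_{G^*}^+)^2=2$ and $d_{G^*}^+(v_1)+n-m = 1+n-2 = n-1$ giving $(n-1)^2$ in the upper bound and $1$ for the remaining term; the cycle $C_2$ contributes one symmetric pair so $c_2=2$, giving the $2(1-\alpha)^2$ term. For $\widehat{U}_n^m$ with $m\ge 3$, the strong component is $C_m$ (no symmetric arcs, so $c_2=0$), each vertex has outdegree $1$, $\sum(d_{G^*}^+)^2=m$, and $d_{G^*}^+(v_1)+n-m = n-m+1$, yielding the stated bounds. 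For $\widehat{\infty}[m_1,\dots,m_t]$, the strong component is $\infty[m_1,\dots,m_t]$ on $m=\sum m_i - t+1$ vertices: the common vertex $v$ has outdegree $t$ and every other vertex has outdegree $1$, so $\sum(d_{G^*}^+)^2 = t^2 + (m-1)$; the $s$ cycles of length $2$ each form a symmetric pair, so $c_2=2s$; and $d_{G^*}^+(v_1)+n-m = t+n-m$. For $\widehat{B}[p,q]$ on $m$ vertices with $p\ge q$: in $B[p,q]$ the initial vertex $x$ has outdegree $p$ (one arc starting each of the $p$ $(x,y)$-paths), the terminal-side vertex $y$ has outdegree $q$, and every internal path vertex has outdegree $1$, so $\sum(d_{G^*}^+)^2 = p^2 + q^2 + (m-2)$ and $d_{G^*}^+(v_1)+n-m = p+n-m$; here $c_2=2$ when $(x,y)$ and $(y,x)$ are both arcs (the only way a $2$-cycle can occur, namely a length-$1$ $(x,y)$-path together with a length-$1$ $(y,x)$-path) and $c_2=0$ otherwise, which produces the two cases.

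I do not expect any genuine obstacle: the argument is a routine substitution into Corollary \ref{co:ch-3.9}. The only points requiring a little care are the outdegree bookkeeping inside each structure (in particular, correctly identifying that the "branch" vertex $v$ in $\infty[\cdots]$ has outdegree $t$ and that $x,y$ in $B[p,q]$ have outdegrees $p,q$ respectively), and the computation of $c_2$, which amounts to recognizing that the only source of length-$2$ closed walks in these digraphs is a directed $2$-cycle, i.e. a symmetric pair of arcs — $s$ such pairs in the generalized $\widehat\infty$ case and at most one (the pair $(x,y),(y,x)$) in the $\widehat B[p,q]$ case. Once these counts are in hand, plugging them into the general bounds of Corollary \ref{co:ch-3.9} and reading off its equality conditions (a directed tree achieves the minimum iff it is an in-tree rooted at the attaching vertex, and the global maximum iff all branches collapse to a single out-star $\stackrel{\rightarrow}{K}_{1,n-m}$ centred at a maximum-outdegree vertex of $G^*$) gives each of the three stated results. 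Hence the corollary follows.
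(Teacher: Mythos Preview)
Your proposal is correct and follows exactly the paper's intended approach: the paper states Corollary~\ref{ex:ch-3.10} as an immediate consequence of Corollary~\ref{co:ch-3.9} without writing out a proof, and your computation of $\sum_i (d_{G^*}^+(v_i))^2$, $d_{G^*}^+(v_1)$, and $c_2$ for each of the three strong components $C_m$, $\infty[m_1,\ldots,m_t]$, $B[p,q]$ is precisely the substitution that is implicitly being invoked. Your bookkeeping (the common vertex of $\infty[\cdots]$ has outdegree $t$; $x$ and $y$ in $B[p,q]$ have outdegrees $p$ and $q$; $c_2$ counts twice the number of symmetric arc pairs, hence $2$, $0$, $2s$, or $2$/$0$ in the respective cases) is accurate.
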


\end{document}